\newcommand{\defn}[1]{\textit{#1}} 
\def\draft{1}
\newcommand{\todo}[1]{\ifnum\draft=1{\color{red}\textbf{TODO:} #1}\fi}
\crefname{equation}{}{} 
\crefname{enumi}{Step}{} 
\declaretheorem[numberwithin=section]{theorem}
\declaretheorem[sibling=theorem]{definition}
\declaretheorem[sibling=theorem]{proposition}
\declaretheorem[sibling=theorem]{lemma}
\declaretheorem[sibling=theorem]{claim}
\declaretheorem[sibling=theorem]{corollary}
\declaretheorem[sibling=theorem]{example}
\declaretheorem[sibling=theorem]{conjecture}
\newcommand{\F}{\mathbb{F}}
\newcommand{\N}{\mathbb{N}}
\DeclareMathOperator{\ex}{ex}
\DeclareMathOperator{\PG}{PG}
\DeclareMathOperator{\Hea}{Hea}
\newcommand{\ind}{\text{-ind}}
\newcommand{\core}{\text{-core}}
\setlist[enumerate]{itemsep=\smallskipamount,parsep=0pt,label={\rm \roman*)}}
\setlist[itemize]{itemsep=\smallskipamount,parsep=0pt}
\title{New Bounds for Induced Tur\'an Problems}
\author{Nathan S. Sheffield}
\date{}
\begin{document}

\maketitle

\begin{abstract}
    In a recent paper, Hunter, Milojevi\'c, Sudakov and Tomon consider the maximum number of edges in an $n$-vertex graph containing no copy of the complete bipartite graph $K_{s,s}$ and no \emph{induced} copy of a ``pattern'' graph $H$. 
    They conjecture that, for $s \geq |V(H)|$, this ``induced extremal number'' differs by at most a constant factor from the standard extremal number of $H$.
    Towards this, we give bounds on the induced extremal number in terms of degeneracy, which establish some non-trivial relationship between the induced and standard extremal numbers in general. 
    We also show that (as in the case of standard extremal numbers) the induced extremal number is dominated by that of the 2-core of a single connected component.
    Finally, we present some graphs arising from incidence geometry which may serve as counterexamples to the conjecture.

\end{abstract}

\section{Introduction}

The extremal number of a graph $H$, denoted $\ex(n, H)$, is the maximum number of edges in an $n$-vertex graph with no copy of $H$ as a subgraph (we will sometimes refer to $H$ as the ``pattern'' graph to be avoided). It is known by a result of Erd\H{o}s, Stone and Simonovits that $\ex(n, H)= n^2 \left(1 - \frac{1}{\chi(H)-1}\right) + o(n^2)$ for all $H$, where $\chi(H)$ is the chromatic number~\cite{ess-2, ess-1}. However, for bipartite graphs, this only allows us to  $\ex(n, H) \leq o(n^2)$; understanding the extremal exponents of bipartite graphs is a major ongoing area of research.\\

What if, instead of just finding $H$ as a subgraph, we are interested in finding an \emph{induced} copy of $H$? We could define $\ex(n, H\ind)$ to be the maximum number of edges in an $n$-vertex graph with no induced copy of $H$. However, this notion is rather uninteresting: unless $H$ is itself complete, the complete graph $K_n$ avoids it as an induced subgraph, so $\ex(n, H\ind) = \binom{n}{2}$. One might ask, though, whether this is in some sense the \emph{only} way to avoid induced copies of $H$ without avoiding $H$ altogether. That is, perhaps if a graph has enough edges to force many copies of $H$, but contains no induced copy of $H$, then there must exist some portion of the graph with very high density. Hunter, Milojevi\'c, Sudakov and Tomon propose the following conjecture:

\begin{conjecture}[\cite{main-paper}]\label{conj:connected}
    For any $s \in \N$, and any connected bipartite $H$, 
    \[\ex(n, \{K_{s,s}, H\ind\}) \leq O_{s, H}(\ex(n,H)).\]
\end{conjecture}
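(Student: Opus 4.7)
The plan is to prove \cref{conj:connected} via a supersaturation-and-embedding strategy. Suppose $G$ is an $n$-vertex, $K_{s,s}$-free graph with $e(G) \geq C \cdot \ex(n,H)$ for a sufficiently large constant $C = C(s, H)$; the goal is to locate an induced copy of $H$. By standard Erd\H{o}s--Simonovits supersaturation, the edge count forces $G$ to contain $\Omega(n^{|V(H)|})$ copies of $H$ as a (not-necessarily-induced) subgraph, so the task reduces to showing that at least one such copy is induced.

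The most natural next move is an embedding argument. Since $G$ is $K_{s,s}$-free, one can apply dependent random choice to extract a large subset $U \subseteq V(G)$ such that every $s$-subset of $U$ has a controlled common neighborhood. Within $U$, I would use the connectedness of $H$ to order its vertices (say via a BFS tree) and attempt to embed them one at a time, maintaining at each step both the required adjacencies and the required non-adjacencies dictated by $H$. An alternative route would be a direct counting argument: bound the number of pairs $(H', e)$ where $H'$ is an $H$-copy in $G$ and $e$ is an ``extra'' edge witnessing non-inducedness, and use $K_{s,s}$-freeness to argue that these pairs are too sparse to cover all $\Omega(n^{|V(H)|})$ copies, forcing some copy to be induced.

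The main obstacle, and likely the reason the conjecture remains open, is that $K_{s,s}$-freeness asymmetrically controls common \emph{neighborhoods} but says nothing directly about common \emph{non-neighborhoods}. The embedding argument needs, at each step, host vertices that simultaneously neighbor one specified set and avoid another, yet the hypothesis only gives upper bounds on common neighborhoods of the former type. A viable proof must convert these upper bounds into lower bounds on the availability of vertices meeting the correct non-adjacency constraints---perhaps by inducting on the structure of $H$ (peeling off low-degree vertices or reducing to the $2$-core, along lines similar to reductions discussed later in the paper), or by exploiting the degeneracy framework. I would expect this conversion step to be where any attempted proof either succeeds or breaks down, and the plausible existence of incidence-geometric counterexamples suggests the conversion may in fact fail for certain $H$.
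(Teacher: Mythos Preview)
The statement you are attempting to prove is \cref{conj:connected}, which is an \emph{open conjecture}: the paper does not contain a proof of it, nor does it claim one. On the contrary, \cref{sec:real-counterexs} exhibits explicit candidate counterexamples (such as $\Hea^-$, the Heawood graph minus an edge) and observes that the conjecture is incompatible with a conjecture of Conlon on $2$-degenerate $C_4$-free bipartite graphs. The paper's contributions are partial results \emph{towards} the conjecture (\cref{prop:wlog-connected}, \cref{thm:wlog-2core}, \cref{thm:degeneracy-controls-ind}), not a proof of it.

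Your proposal is accordingly not a proof but a plan, and you yourself correctly identify where it breaks: $K_{s,s}$-freeness bounds common neighborhoods from above but gives no direct handle on common non-neighborhoods, so the ``embed one vertex at a time, maintaining required non-adjacencies'' step has no mechanism to succeed in general. The supersaturation-plus-counting alternative has the same defect: bounding the number of pairs $(H',e)$ with $e$ an extra edge requires controlling, for each non-edge $(v_i,v_j)$ of $H$, how many $H$-copies have $\pi(v_i)\pi(v_j)\in E(G)$, and there is no reason this should be $o(n^{|V(H)|})$ from $K_{s,s}$-freeness alone. Indeed, the projective-plane constructions in \cref{sec:real-counterexs} show that for some $H$ \emph{every} copy in the host graph acquires a specific extra edge, so the counting route provably fails there. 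In short, the gap you flag is genuine and is exactly why the statement remains conjectural.
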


We always have $\ex(n, \{K_{s,s}, H\ind\}) \geq \ex(n, \{K_{s,s}, H\})$, and if $s \geq |V(H)|$ then it is clear that we have $\ex(n, \{K_{s,s}, H\}) = \ex(n, H)$. So, this conjecture is equivalent to the claim that, for sufficiently large $s$, the extremal numbers $\ex(n, \{K_{s,s}, H\ind\})$ and $\ex(n,H)$ can differ by at most a constant factor (where the constant depends on $s$ and $H$ --- we will treat these as constants in our asymptotic notation for the rest of the paper). As a heuristic for why such a conjecture might be reasonable, one could consider searching for copies of $H$ in the Erd\H{o}s--R\'enyi random graph $G(n,p)$: if $p \geq \Omega(1)$, then there will be $\Theta(n^{2s})$ copies of $K_{s,s}$, but if $p \leq o(1)$, then all but a subconstant fraction of the copies of $H$ will be induced copies.\\

Hunter, Milojevi\'c, Sudakov and Tomon provide some evidence for their conjecture by directly reproducing many of the best known asymptotic upper bounds on standard extremal numbers. In particular, they show $\ex(n, \{K_{s,s}, C_{2k}\ind \}) \leq O(n^{1+1/k})$ for $C_{2k}$ a length-$2k$ cycle, $\ex(n, \{K_{s,s}, Q_8 \ind \}) \leq O(n^{8/5})$ for $Q_8$ the skeleton of a 3-dimensional cube, and $\ex(n, \{K_{s,s}, H\ind \}) \leq O(n^{2 - 1/r})$ for any bipartite $H$ with maximum degree $r$ on one side ---  all of which match the corresponding bounds known for $\ex(n, H)$~\cite{main-paper}. Fox, Nenadov and Pham show that this last result can be extended to give $\ex(n, \{K_{s,s}, H\ind \}) \leq O(n^{2 - 1/r})$ whenever one side of $H$ has at most $r$ vertices that are complete to the other side, and all other vertices on that side have degree at most $r$~\cite{fox-induced}. 
Under the same restriction on $H$, plus the additional requirement that $H$ contain no copy of $K_{r,r}$, Axenovich and Zimmerman show that any bipartite graph avoiding $H$ and $K_{s,s}$ can have only $o(n^{2- 1/r})$ edges~\cite{axenovich-induced} --- this provides an induced analogue of a result of Sudakov and Tomon~\cite{sudakov-tomon-noktt}.
It is also known by a result of Scott, Seymour and Spirkl that $\ex(n, \{K_{s,s}, T\ind\}) \leq O(n)$ when $T$ is a tree~\cite{induced-trees}; Hunter, Milojevi\'c, Sudakov and Tomon show that the constant factor in that result can be made polynomial in $s$~\cite{main-paper}. \\

\subsection{Main results}

Hunter, Milojevi\'c, Sudakov and Tomon's original statement of \cref{conj:connected} originally excluded the word ``connected'' --- however, we note in \cref{sec:ez-counterex} that this original statement has a simple counterexample. The graph $H$ we give as a counterexample consists of three vertices, with two joined by a single edge. To rule out such trivial counterexamples, one could choose either to require $H$ to be connected, or simply require that $H$ have at least two edges. Our first result implies that both modifications to the conjecture are equivalent.

\begin{restatable*}{proposition}{connected}\label{prop:wlog-connected}
    For any $s\in\N$, if $H$ is the disjoint union of two subgraphs $H_1$ and $H_2$, then 
    \[\ex(n, \{K_{s,s}, H\ind\}) \leq O(\ex(n, \{K_{s,s}, H_1\ind \}) + \ex(n, \{K_{s,s}, H_2\ind \}) + n).\]
\end{restatable*}

One other consequence of \cref{prop:wlog-connected} is that, as long as $\ex(n, \{K_{s,s}, H\ind\}) \geq Cn$ for some sufficiently large constant $C$, this value remains unchanged up to constant factors when we remove all isolated vertices from $H$. For the standard extremal number, it is easy to see that such a statement holds even if we also remove all vertices of degree $1$. In \cref{sec:2core}, we reproduce this fact in the induced setting.

\begin{definition}
    For $k \geq 1$, let $k\core(G)$ be the largest induced subgraph of $G$ with minimum degree at least $k$.
\end{definition}

\begin{restatable*}{theorem}{twocore}\label{thm:wlog-2core}
    For any $s\in\N$ and any graph $H$, we have 
    \[\ex(n, \{K_{s,s}, H\ind\}) \leq O(\ex(n, \{K_{s,s}, 2\core(H)\ind\}) + n).\]
\end{restatable*}

Together with the results of \cite{main-paper} on cycles, \cref{prop:wlog-connected} and \cref{thm:wlog-2core} imply that we have $\ex(n, \{K_{s,s}, H\ind\}) \leq O(n^{1+1/k})$ for any bipartite $H$ with girth at least $2k$, as long as every connected component of $H$ contains at most a single cycle.\\

In \cref{sec:degeneracy}, with the goal of determining a relationship between $\ex(n,H)$ and $\ex(n,\{K_{s,s}, H\ind\})$ in general, we give an upper bound in terms of the degeneracy of $H$.

\begin{definition}
    For $k \geq 1$, a graph $G$ is called $k$-degenerate if $(k+1)\core(G)$ is empty. We define the degeneracy of $G$ to be the minimum $k$ such that $G$ is $k$-degenerate.
\end{definition}

Although there is no general method known for determining extremal numbers of bipartite graphs, it is known that the degeneracy of the pattern graph always offers a somewhat reasonable proxy: for any bipartite $H$ of degeneracy $r$, we have $\Omega(n^{2 - 2/r}) \leq \ex(n, H) \leq O(n^{2 - 1/4r})$~\cite{aks-degenerate}, and it is conjectured that the upper bound can be strengthened to $O(n^{2 - 1/r})$ \cite{erdos-favorites}. We show that degeneracy also controls the induced extremal number, although we achieve weaker dependency on $r$ than what is known for standard extremal numbers:

\begin{restatable*}{theorem}{degeneracy}\label{thm:degeneracy-controls-ind}
    For any $s\in\N$, and any bipartite $H$ of degeneracy $r$, we have 
    \[\ex(n, \{K_{s,s}, H\ind\}) \leq O(n^{2 - 1/(20r^4)}).\]
\end{restatable*}

The fact that degeneracy controls both the standard and induced extremal numbers immediately establishes some nontrivial relationship between them: if $\ex(n,H) = O(n^\alpha)$ and $\ex(n, \{K_{s,s}, H\ind\}) = \Omega(n^\beta)$, then we can show unconditionally that $\beta \leq 2 - \frac{(2 - \alpha)^4}{320}$ (whereas \cref{conj:connected} would hold that $\beta \leq \alpha$). It would be interesting to quantitatively improve this relationship by showing an upper bound of the form $\ex(n, \{K_{s,s}, H\ind\}) \leq O(n^{2-1/cr})$ for some constant $c$ independent of $r$; we make some progress towards this goal by finding copies of $H$ which, while not necessary fully induced, avoid some particular subset of $H$'s non-edges. \\

Finally, in \cref{sec:real-counterexs}, we discuss the possibility that \cref{conj:connected} could be false. We note several graphs $H$ where $\ex(n, \{K_{2,2}, H\ind) = \Theta(n^{3/2})$, but such that we may plausibly have $\ex(n, H) \leq o(n^{3/2})$. However, due to the difficulty of determining standard extremal numbers, we are unable to prove such upper bounds. 

\subsection{Additional related work}

The structure of graphs avoiding given induced subgraphs has been an active area of research for some time. Much existing work is motivated by the Erd\H{o}s--Hajnal Conjecture, which claims that, for any $H$, any $n$-vertex graph with no induced copy of $H$ must contain a clique or independent set of size polynomial in $n$~\cite{erdos-hajnal-conj, erdos-hajnal-survey}. Towards this conjecture, Fox and Sudakov have shown that any graph avoiding induced copies of $H$ must contain either a complete bipartite graph or independent set of polynomial size~\cite{fox-erdos-hajnal}. \\

To our knowledge, Hunter, Milojevi\'c, Sudakov and Tomon are the first to systematically consider $\ex(n, \{K_{s,s}, H\ind\})$ for general bipartite $H$; however, the problem of forbidding one induced graph and one non-induced graph has recieved prior attention. K\"uhn and Osthus have shown that $\ex(n, \{K_{s,s}, \mathcal{H}\ind\}) \leq O(n)$, where $\mathcal{H}$ is the family of all subdivisions of a given graph $H$~\cite{kuhn2004induced}. Loh, Tait, Timmons and Zhou showed that $\ex(n, \{K_r, K_{s,t}\ind\}) \leq O(n^{2 - 1/s})$ for all $r,s,t$ ~\cite{loh-induced}, prompting further consideration of $\ex(n, \{F, H\ind\})$ for non-bipartite $F$~\cite{ergemlidze-turan, illingworth-note, illingworth-k2t}.\\

While our primary concern in this paper is the dependence on $n$, some existing work also asks about the dependence of $\ex(n, \{K_{s,s}, H\ind\})$ on $s$. The work of Fox, Nenadov and Pham~\cite{fox-induced} deals with a generalization of this question: as opposed to forbidding $K_{s,s}$, they consider host graphs which are required to be $(c, s)$-sparse, meaning that every pair of vertex subsets $A, B \subseteq V(G)$ with $|A|, |B| \geq s$ have $e(A, B) \leq (1-c)|A||B|$. Note that for $c \leq 1/s^2$ this condition is equivalent to forbidding $K_{s,s}$, however for larger $c$ this is a more restrictive condition --- Fox, Nenadov and Pham develop techniques to attain improved dependence on $s$ for large $c$, and demonstrate that in some cases these methods also interpolate to a good dependence on $s$ when $c$ is small.

\section{Reducing to a single connected component}\label{sec:ez-counterex}

We begin by noting our counterexample to the original statement of \cref{conj:connected}, which did not require $H$ to be connected.
\begin{example}
        Let $H$ be the graph consisting of two vertices connected by an edge, and a third isolated vertex. Note that 
    \[\ex(n, H) = \begin{cases}
        1 \text{ if $n = 2$}\\
        0 \text{ otherwise,}
    \end{cases}\]
    since a subgraph isomorphic to $H$ just consists of a single edge plus some third vertex. However, in the star graph $K_{1, n-1}$, any three vertices induce either an independent set or a path, depending on whether one of the three vertices is the center of the star. Since $K_{1,n-1}$ contains no cycles, we have exhibited a graph on $n-1$ edges with no copy of $K_{2,2}$ and no induced copy of $H$, giving $\ex(n, \{K_{2,2}, H\ind\}) \geq n-1 \geq \omega(\ex(n,H))$. 
\end{example}

This appears more like an annoying edge case than a serious issue with the spirit of the conjecture, so it seems appropriate to exclude it by requiring $H$ to be connected. But an alternative way to modify the conjecture would be to simply exclude this one particular counterexample --- that is, to allow disconnected $H$, but specially disregard the case where $H$ consists only of a single edge and some number of isolated vertices.



\begin{conjecture}\label{conj:two-edges}
    For any $s \in \N$, and any bipartite $H$ \emph{with at least two edges}, 
    \[\ex(n, \{K_{s,s}, H\ind\}) \leq O(\ex(n,H)).\]
\end{conjecture}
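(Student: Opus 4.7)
The plan is to reduce this statement to \cref{conj:connected} (the ``connected'' version) using \cref{prop:wlog-connected}. Given any bipartite $H$ with at least two edges, I would decompose $H$ into its connected components $H_1, \ldots, H_k$ and apply \cref{prop:wlog-connected} iteratively (at each step grouping $H_1 \sqcup \cdots \sqcup H_j$ against $H_{j+1}$) to obtain
\[\ex(n, \{K_{s,s}, H\ind\}) \leq O\Bigl(\sum_{i=1}^k \ex(n, \{K_{s,s}, H_i\ind\}) + n\Bigr),\]
where $k$ is a constant since $H$ is fixed. For each nontrivial component $H_i$ (those with at least one edge), \cref{conj:connected} would yield $\ex(n, \{K_{s,s}, H_i\ind\}) \leq O(\ex(n, H_i)) \leq O(\ex(n, H))$, using that $H_i$ is a subgraph of $H$ for the second inequality. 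Any isolated-vertex components contribute $0$ and can be ignored.

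It then remains to absorb the additive $n$ into $\ex(n, H)$. For this I would use the easy observation that $\ex(n, H) \geq \Omega(n)$ whenever $H$ is bipartite with at least two edges: either $H$ is a subgraph of a star, in which case $H$ has a vertex of degree at least $2$, so a perfect matching on $n$ vertices avoids $H$; or $H$ is not contained in any star, in which case (since $H$ is triangle-free) $H$ must contain $2K_2$, so the star $K_{1,n-1}$ itself avoids $H$. Either way, $\ex(n,H) \geq \lfloor n/2 \rfloor$, completing the reduction.

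The main obstacle, unsurprisingly, is \cref{conj:connected} itself: the argument above establishes only that the ``at least two edges'' and ``connected'' formulations are equivalent, so this statement is no easier to prove than the original conjecture. The substantive extremal question --- controlling $\ex(n, \{K_{s,s}, H\ind\})$ for connected bipartite $H$ --- remains open in general, and the remainder of the paper chips away at it via \cref{thm:wlog-2core} and \cref{thm:degeneracy-controls-ind}.
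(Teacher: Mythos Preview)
Your proposal is correct and takes essentially the same approach as the paper: both reduce \cref{conj:two-edges} to \cref{conj:connected} via \cref{prop:wlog-connected} together with the observation that $\ex(n,H)\ge\Omega(n)$ once $H$ has at least two edges. The only cosmetic difference is that you decompose $H$ fully into connected components and apply \cref{conj:connected} directly to each, whereas the paper phrases the same reduction as a minimal-counterexample argument with a single binary split $H=H_1\sqcup H_2$.
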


Since the only connected graph with a single edge is $K_{1,1}$, it is clear that \cref{conj:two-edges} implies \cref{conj:connected}. Our next observation will show that the reverse implication also holds, so that these two versions of the conjecture are equivalent.

\connected

\begin{proof}
    The K\H{o}v\'ari--S\'os--Tur\'an Theorem gives that $\ex(n, K_{s,s}) \leq O(n^{2-1/s})$, which means in particular that any sufficiently large graph with constant edge density must contain a copy of $K_{s,s}$. Let $N$ be such that, for any $N' \geq N$, any $N'$-vertex graph with at least $\frac{N'^2}{|V(H))|^2}$ edges must contain $K_{s,s}$ as a subgraph. We will show that, for any $n$, any $n$-vertex graph $G$ with at least $\ex(n, \{K_{s,s}, H_1\ind\}) + \ex(n, \{K_{s,s}, H_2\ind\}) + N |V(H)| n$ edges must contain either an induced copy of $H = H_1 \sqcup H_2$, or a copy of $K_{s,s}$.\\

    The proof follows from a simple supersaturation argument. We claim that, if $G$ contains no $K_{s,s}$, then it must contain $N$ induced copies of $H_1$ and $N$ induced copies of $H_2$, all of which are vertex-disjoint. This can be shown greedily: suppose we have already found fewer than disjoint $N$ copies of $H_1$ and $N$ copies of $H_2$, and consider the subgraph of $G$ induced by all vertices not included in any of those copies. That subgraph contains at least $|E(G)| - N |V(H_1)| n - N |V(H_2)|n = |E(G)| - N|V(H)|n \geq \ex(n, \{K_{s,s}, H_1\ind\}) + \ex(n, \{K_{s,s}, H_2\ind\})$ edges, and thus we can find another disjoint induced copy of whichever of $H_1$ or $H_2$ we desire. \\

    Now, suppose for contradiction that $G$ contains no induced copy of $H$. This means that each of those copies of $H_1$ must have an edge to each of those copies of $H_2$, since otherwise the pair would induce a copy of $H$. Thus, the subgraph induced by all $N$ disjoint copies of $H_1$ and $H_2$ together --- which is a graph on $N |V(H)|$ vertices --- contains at least $N^2$ edges. By our choice of $N$, this guarantees that the graph contains a copy of $K_{s,s}$.
\end{proof}

\begin{corollary}
    \cref{conj:connected} $\iff$ \cref{conj:two-edges}.
\end{corollary}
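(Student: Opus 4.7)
The direction \cref{conj:two-edges} $\Rightarrow$ \cref{conj:connected} is essentially definitional, as already noted in the excerpt: a connected bipartite $H$ either has $\geq 2$ edges, in which case \cref{conj:two-edges} applies directly, or equals $K_{1,1}$, in which case forbidding an induced $K_{1,1}$ forbids all edges, so both sides of the claimed inequality vanish.

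For the reverse direction, I would take bipartite $H$ with $\geq 2$ edges and decompose $H = C_1 \sqcup \cdots \sqcup C_m \sqcup \{v_1\} \sqcup \cdots \sqcup \{v_l\}$, where the $C_i$ are the connected components containing at least one edge and the $v_j$ are the isolated vertices of $H$. Iterating \cref{prop:wlog-connected} a constant number of times (at most $|V(H)|$) yields
\[
\ex(n, \{K_{s,s}, H\ind\}) \leq O\!\left(\sum_{i=1}^{m} \ex(n, \{K_{s,s}, C_i\ind\}) + \sum_{j=1}^{l} \ex(n, \{K_{s,s}, \{v_j\}\ind\}) + n\right).
\]
Each single-vertex term is zero, since for $n \geq 1$ no graph avoids an induced single vertex. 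Each $C_i$ is bipartite, connected, and contains $\geq 1$ edge, so \cref{conj:connected} gives $\ex(n, \{K_{s,s}, C_i\ind\}) \leq O(\ex(n, C_i)) \leq O(\ex(n, H))$, where the second inequality holds because $C_i$ is a subgraph of $H$ and hence every $C_i$-free graph is $H$-free.

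To close the argument I would absorb the $+n$ using the folklore observation that $\ex(n, H) \geq \Omega(n)$ whenever $H$ has $\geq 2$ edges: if $H$ contains a vertex of degree $\geq 2$, then a matching on $n$ vertices is $H$-free; if $H$ is itself a matching of size $\geq 2$, then the star $K_{1, n-1}$ is $H$-free. Either way we obtain $\Omega(n)$ edges in an $H$-free graph, and combining everything gives $\ex(n, \{K_{s,s}, H\ind\}) \leq O(\ex(n, H))$.

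The only subtle point I anticipate is the handling of the isolated-vertex components: these look worrying because their \emph{standard} extremal number is zero, yet the decomposition produces induced-extremal terms for them. The saving grace is that induced single vertices are unavoidable, so those terms also collapse to zero; an alternative route, if one dislikes the single-vertex convention, would be to pool all isolated vertices into a single part $\overline{K_l}$ and use Caro--Wei combined with K\H{o}v\'ari--S\'os--Tur\'an to show that any $K_{s,s}$-free graph eventually contains an independent set of size $l$, forcing that term to $O(1)$. Everything else is routine application of \cref{prop:wlog-connected} and \cref{conj:connected}.
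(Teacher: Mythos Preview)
Your proof is correct and uses the same two ingredients as the paper: \cref{prop:wlog-connected} to break $H$ into pieces, and the observation that $\ex(n,H)\geq\Omega(n)$ once $H$ has at least two edges. The only structural difference is packaging: the paper argues by taking a minimal counterexample and splitting it once as $H=H_1\sqcup H_2$, whereas you iterate \cref{prop:wlog-connected} all the way down to the connected components and apply \cref{conj:connected} to each. Your direct version has the mild advantage that the isolated-vertex components are handled explicitly (their induced term is zero), while in the paper's minimal-counterexample argument one must implicitly check that a piece with at most one edge cannot carry the $\omega(\ex(n,H))$ growth; conversely, the paper's version avoids having to track the number of iterations of \cref{prop:wlog-connected}. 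Either way the content is the same.
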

\begin{proof}
    The \cref{conj:two-edges} $\implies$ \cref{conj:connected} direction is trivial, as 
    the only connected graph with a single edge is $K_{1,1}$, in which case we have $\ex(n,K_{1,1}) = \ex(n, \{K_{s,s}, K_{1,1}\ind\}) = 0$. To show the reverse direction, we assume \cref{conj:connected}, and demonstrate \cref{conj:two-edges} by contradiction. Fix $s \in \N$, and let $H$ be a bipartite graph with at least two edges such that $\ex(n, \{K_{s,s}, H\ind\}) \geq \omega(\ex(n, H))$ --- in fact, take $H$ to have the minimum possible number of vertices among all such graphs. $H$ cannot be connected, since otherwise \cref{conj:connected} would apply. So, we can split $H$ into two disconnected components $H = H_1 \sqcup H_2$. By \cref{prop:wlog-connected}, we have $\ex(n, \{K_{s,s}, H_1\ind\}) + \ex(n, \{K_{s,s}, H_2\ind\}) + O(n) \geq \omega(\ex(n, H))$. Since $H$ has at least two edges, $\ex(n,H) \geq \Omega(n)$, so either $\ex(n, \{K_{s,s}, H_1\ind\}) \geq \omega(\ex(n, H)) \geq \omega(\ex(n, H_1))$ or $\ex(n, \{K_{s,s}, H_2\ind\}) \geq \omega(\ex(n, H)) \geq \omega(\ex(n, H_2))$ --- but either of these would contradict minimality of $H$.
\end{proof}

So, even if we care about the disconnected case, it suffices to consider the question only for connected $H$. In the next section, we will show that it also suffices to consider the question only for $H$ of minimum degree at least two.

\section{Reducing to the 2-core}\label{sec:2core}

It is known that \cref{conj:connected} holds when $H$ is restricted to be a tree --- i.e., $\ex(n, \{K_{s,s}, H\ind\}) \leq O(\ex(n,H)) \leq O(n)$ whenever $H$ is a tree \cite{induced-trees, main-paper}. For the standard extremal number, it is easy to show that degree-one vertices simply do not affect the extremal number: not only is $\ex(n, T) \leq O(n)$ for all trees $T$, but more generally attaching a tree to a single vertex of any graph $H$ (not necessarily a tree) can change $H$'s extremal number by at most an additive $O(n)$. In this section, we will show how to recover a statement of this form in the induced setting, too. \\

\twocore

The first step in the proof is a standard regularization argument, originally due to Erd\H{o}s and Simonovits.

\begin{lemma}[\cite{ess-1}; see also \cite{furedi-history}]\label{lem:regularize}
    For any $\alpha \in [0,1]$, and any $N$-vertex graph $G$ with at least $N^{1+\alpha}$ edges, there exists an induced subgraph $G' \subseteq G$ with $n \geq N^{\frac{\alpha - \alpha^2}{1 + \alpha}}$ vertices and at least $\frac{2}{5}n^{1+\alpha}$ edges, such that the maximum degree in $G'$ is at most a $20 \cdot 2^{1/\alpha^2}$ factor larger than the minimum degree in $G'$.
\end{lemma}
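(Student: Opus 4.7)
The plan is to follow the classical Erd\H{o}s--Simonovits strategy, splitting the argument into two stages: first establish a minimum-degree lower bound, then cap the maximum degree.

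For Stage 1, I would apply the standard greedy deletion argument: starting from $G$, iteratively delete any vertex whose degree in the current subgraph is below $\bar d/2$, where $\bar d = 2N^\alpha$ is the initial average degree of $G$. Since each deletion removes fewer than $\bar d/2$ edges, a telescoping edge count shows that the process must terminate with a nonempty induced subgraph $H_1$ satisfying $\delta(H_1) \geq N^\alpha$. Any such subgraph has $|V(H_1)| \geq \delta(H_1) + 1 \geq N^\alpha$, so $\delta(H_1) \geq |V(H_1)|^\alpha$ holds automatically, giving $e(H_1) \geq \tfrac{1}{2}|V(H_1)|^{1+\alpha}$.

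For Stage 2, I would partition $V(H_1)$ into dyadic degree classes $V_j = \{v : 2^j \delta \leq \deg_{H_1}(v) < 2^{j+1}\delta\}$ with $\delta := \delta(H_1)$, yielding $J = O(\log N)$ classes in total. The induced subgraph on any window of $k$ consecutive classes has $H_1$-degree ratio at most $2^k$. By an averaging argument over all windows of width $k = \Theta(1/\alpha^2)$, at least one window captures a sufficiently large share of the edges that the corresponding induced subgraph $G'$ satisfies $e(G') \geq \tfrac{2}{5}|V(G')|^{1+\alpha}$, with max/min ratio at most $2^{O(1/\alpha^2)}$. The choice $k = \Theta(1/\alpha^2)$ is exactly what produces the $2^{1/\alpha^2}$ factor in the conclusion.

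The main obstacle is that restricting to a window discards all edges going to out-of-window vertices, so the minimum-degree bound from Stage 1 need not survive in $G'$. I would handle this by iterating Stages 1 and 2 alternately on the current subgraph: after each window restriction, re-run the greedy deletion, then re-partition and re-window, until both bounds hold simultaneously. The density exponent $\alpha$ is preserved across iterations and the max/min ratio improves by a constant factor per round, so the process terminates in a bounded number of rounds. The delicate part is the cumulative vertex-loss accounting across these rounds: this is what produces exactly the exponent $\frac{\alpha - \alpha^2}{1+\alpha}$ in the final bound, and matching the specific constants $\tfrac{2}{5}$ and $20$ requires tightening the averaging step and checking that the pigeonhole and telescoping losses compound correctly.
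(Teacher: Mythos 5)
The paper does not actually prove this lemma --- it is quoted from Erd\H{o}s--Simonovits (with the remark, attributed to Axenovich--Zimmerman, that the subgraph can be taken induced) --- so your proposal has to stand on its own, and as written Stage~2 has a genuine gap. The induced subgraph on a window of $k$ consecutive dyadic degree classes keeps only the edges with \emph{both} endpoints in the window, and nothing forces a constant share of the edges to be of this form. Concretely, take $G=K_{m,\lceil\sqrt m\rceil}$, so that $N\approx m$ and $e(G)\geq N^{1+\alpha}$ for $\alpha$ just below $1/2$. Stage~1 deletes essentially nothing (the minimum degree $\sqrt m$ already exceeds $e(G)/N$), the two sides lie in dyadic classes about $\tfrac12\log_2 m$ apart, and every window of bounded width $k=\Theta(1/\alpha^2)$ meets only one side of the bipartition --- hence spans \emph{no} edges at all. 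The conclusion of the lemma does hold for this graph (a balanced $K_{\sqrt m,\sqrt m}$ works), but it is reached by deleting vertices from the large side, after which the degrees change; it cannot be reached by selecting classes of the host graph's degree sequence. This also shows why alternating Stages~1 and~2 does not repair the argument: the very first windowing step can already discard every edge. Even in graphs where many edges do lie inside windows, you are averaging over $J=\Theta(\log N)$ classes, so a window is only guaranteed an $\Omega(k/\log N)$ fraction of the edges, which by itself does not give $e(G')\geq\tfrac25\,|V(G')|^{1+\alpha}$ with the same exponent $\alpha$ and that constant.

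The remaining claims --- that the max/min ratio improves by a constant factor per round, that the alternation terminates in boundedly many rounds, and that the cumulative vertex loss comes out to exactly $N^{(\alpha-\alpha^2)/(1+\alpha)}$ with the constants $\tfrac25$ and $20\cdot 2^{1/\alpha^2}$ --- are asserted rather than argued, and they are precisely where the work lies. The classical Erd\H{o}s--Simonovits argument uses a different mechanism: roughly, one repeatedly passes to subgraphs with degrees \emph{recomputed inside the subgraph}, arranged so that whenever near-regularity fails the density exponent (or the density coefficient) strictly increases; since this quantity is bounded, the recursion halts after on the order of $1/\alpha^2$ rounds, which is where the factor $2^{1/\alpha^2}$ comes from, and tracking the vertex loss per round produces the exponent $\frac{\alpha-\alpha^2}{1+\alpha}$. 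To salvage your outline you would need an ingredient of that kind --- a recursion on re-normalized subgraphs --- rather than a one-shot (or repeated) partition of the original degree sequence into dyadic windows.
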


The statement in \cite{ess-1} does not mention that $G'$ can be taken to be induced, but as noted in \cite{axenovich-induced} this is immediate from the proof. \\

\cref{lem:regularize} will allow us to show the key technical tool in our proof of \cref{thm:wlog-2core}: a supersaturation result allowing us to find many induced copies of a subgraph that all share only one specified vertex.

\begin{lemma}\label{lem:flower}
    Fix any $s,t \in \N$, any graph $H$ containing at least one cycle, and any $v \in V(H)$. There exists some constant $C$ independent of $N$ such that for any $N$-vertex $K_{s,s}$-free graph $G$ on $C \cdot \ex(N, \{K_{s,s}, H\ind\})$ edges, there exist $t$ induced copies of $H$ in $G$ such that any pair of copies overlap on exactly one vertex, and that vertex is the image of $v$ for both copies.
\end{lemma}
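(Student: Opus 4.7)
The plan is to combine the regularization lemma with a pigeonhole-plus-greedy argument: first reduce to a subgraph with balanced degrees, then show some vertex $u_0$ is the image of $v$ in many induced copies of $H$, and finally extract from those copies a set of $t$ that are pairwise disjoint outside $u_0$.

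First, I would apply \cref{lem:regularize} to $G$ with a suitable parameter $\alpha>0$ (chosen so that $|E(G)| = C \cdot \ex(N, \{K_{s,s}, H\ind\}) \geq N^{1+\alpha}$; since $H$ contains a cycle, $\ex(N,H)$ is superlinear and this $\alpha$ is well-defined for $N$ large). This produces an induced subgraph $G_0 \subseteq G$ on $n$ vertices with at least $\frac{2}{5} n^{1+\alpha}$ edges and maximum-to-minimum degree ratio bounded by a constant $K$ depending only on $\alpha$. By taking $C$ in the statement of \cref{lem:flower} sufficiently large (depending on $s$, $t$, and $|V(H)|$), we ensure that $|E(G_0)|$ exceeds $\ex(n, \{K_{s,s}, H\ind\})$ by any desired factor.

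Second, I would use a supersaturation argument, in the spirit of the proof of \cref{prop:wlog-connected}, to produce a large number $M$ of induced copies of $H$ in $G_0$. Balanced degrees are crucial: removing any bounded set $S$ of vertices decreases $|E(G_0)|$ by at most $|S|\cdot \Delta(G_0) = O(|S| \cdot n^\alpha)$, so as long as $|E(G_0)|$ is enough bigger than $\ex$, each iteration finds a new induced copy of $H$ and the process can be continued. By pigeonhole over the $n$ vertices of $G_0$, there exists $u_0 \in V(G_0)$ which is the image of $v$ in at least $M/n$ of these copies. Let $\mathcal{F}_{u_0}$ be the family of $(|V(H)|-1)$-subsets $S$ of $V(G_0) \setminus \{u_0\}$ such that $G_0[\{u_0\} \cup S] \cong H$ with $u_0$ playing the role of $v$.

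Finally, I would extract the flower by finding a matching of size $t$ in the hypergraph $\mathcal{F}_{u_0}$: greedily pick a hyperedge $S_i$, delete all hyperedges sharing any vertex with $S_i$, and repeat. This succeeds in producing $t$ pairwise disjoint sets provided $|\mathcal{F}_{u_0}|$ exceeds $t(|V(H)|-1)$ times the maximum vertex-degree $D$ of $\mathcal{F}_{u_0}$ (i.e., the maximum over $w \in V(G_0)\setminus\{u_0\}$ of the number of induced $H$-copies containing both $u_0$ as image of $v$ and $w$). The $K_{s,s}$-free hypothesis controls $D$, via the usual codegree and small-subgraph bounds for $K_{s,s}$-free graphs.

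The main obstacle is the quantitative interplay in the last step: matching the supersaturation lower bound on $|\mathcal{F}_{u_0}|$ against the $K_{s,s}$-free upper bound on $D$ so that $|\mathcal{F}_{u_0}| \geq t (|V(H)|-1) D$ holds. This is where $K_{s,s}$-freeness is essential rather than cosmetic, and where the constant $C$ must be taken to grow appropriately with $t$; everything else is a relatively routine assembly of regularization, greedy peeling, and pigeonhole.
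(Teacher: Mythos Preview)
Your outline has a genuine gap at the interface of steps two and three, and the difficulty you flag at the end is not merely the ``main obstacle'' but in fact cannot be closed with the tools you describe.

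The greedy supersaturation you sketch (repeatedly find an induced copy, delete some vertices, continue) produces at most $O(n)$ copies: with maximum degree $\Theta(n^\alpha)$ and edge count $\Theta(n^{1+\alpha})$, you can afford to delete only $O(n)$ vertices before running out of surplus over $\ex(n,\{K_{s,s},H\ind\})$. If the copies you find are vertex-disjoint, then each vertex of $G_0$ is the $v$-image in at most one of them, so pigeonhole yields $M/n = O(1)$ copies through $u_0$ --- nowhere near enough to beat even $D=1$, let alone to feed a greedy matching. To get a useful lower bound on $|\mathcal F_{u_0}|$ you would need a genuine counting supersaturation for \emph{induced} copies, which is a substantially harder statement that your outline does not supply. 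Separately, the claim that $K_{s,s}$-freeness ``controls $D$'' is not substantiated: the number of induced $H$-copies through two fixed vertices can be polynomially large in $n$ (already for $H=C_6$ in a projective-plane incidence graph one has $D = \Theta(n)$), and there is no evident mechanism by which the forbidden $K_{s,s}$ forces $|\mathcal F_{u_0}|/D$ to be large.

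The paper's proof sidesteps both issues entirely. Rather than counting copies and comparing against a degree bound, it randomly partitions $V(G')\setminus\{u\}$ into $k$ equal colour classes and shows that, for a suitable fixed $u$ and partition, a constant fraction of the colour classes each contain an induced copy of $H$ rooted at $u$. The copies are then disjoint outside $u$ \emph{by construction}, because they live in distinct colour classes. The only ingredients needed are (i) that a random $(n/k)$-subset inherits near-regularity (Chernoff), and (ii) that in a near-regular graph with edge count above the extremal threshold, a constant fraction of vertices can serve as the $v$-image (otherwise delete the few that can and still exceed the threshold, contradiction). Crucially, $K_{s,s}$-freeness enters only through the definition of the threshold $\ex(n,\{K_{s,s},H\ind\})$, not through any codegree bound on $D$.
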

\begin{proof}
    Note that any graph containing a cycle must have extremal number at least $N^{1+\alpha}$ for some constant $\alpha$, since $\ex(N, C_{2k}) \geq N^{1+1/2k}$ (this follows from considering a random host graph)~\cite{furedi-history}. So, \cref{lem:regularize} implies that, for any $C'$, if $C$ is chosen sufficiently large and $G$ is an $N$-vertex graph on $C\cdot \ex(N, \{K_{s,s}, H\ind\})$ vertices, then we can pass to a subgraph $G' \subseteq G$ with $n$ vertices and at least $C' \cdot (\ex(n, \{K_{s,s}, H\ind\}))$ edges, such that $G$ has maximum degree at most $\frac{\Delta \cdot E(G')}{n}$ and minimum degree at least $\frac{E(G')}{\Delta n}$, for a constant $\Delta = 20 \cdot 2^{1/\alpha^2}$ independent of $C'$. Now, for some $k$ to be determined later, consider the following randomized procedure:

    \begin{enumerate}
        \item Choose a uniform random $\frac{n}{k}$ vertices $R \subseteq V(G')$.
        \item Choose a uniform random vertex $u \in R$.
        \item Declare the procedure to have succeeded if there exists an induced embedding $\pi$ of $H$ in $R$ such that $\pi(v) = u$. 
    \end{enumerate}

    We claim that as long as $N$ (and therefore $n$) is sufficiently large, this procedure has a reasonably high success probability. The first necessary observation is that, with high probability, the graph remains nearly regular upon subsampling to $R$.

    \begin{claim}\label{clm:stay-reg}
        With probability at least $1-2^{-\mathsf{poly}(n)}$, every vertex in $R$ has at least $\frac{|E(G')|}{2 k \Delta n}$ and at most $\frac{2 \Delta \cdot |E(G')|}{kn}$ neighbors in $R$.
    \end{claim}
    \begin{proof}
        For any specific vertex $x \in V(G')$, consider the probability that $x$ has fewer than $\frac{|E(G')|}{2 k \Delta n}$ neighbors in $R$. We know that $x$ has degree at least $\frac{|E(G')|}{\Delta n}$, so this probability is at most the chance that fewer than a $\frac{1}{2k}$ fraction of $x$'s neighbors are chosen to belong to $R$. Since $R$ is a uniform random $1/k$-fraction of all vertices, a Chernoff bound guarantees that this probability is exponentially small in the size of $x$'s neighborhood~\cite{pm}, which is at least $\frac{|E(G')|}{\Delta n} \geq \frac{n^{\alpha}}{\Delta} = \mathsf{poly}(n)$. Union bounding over all $x \in V(G')$, this means that the probability that any of them have fewer than $\frac{|E(G')|}{2 k \Delta n}$ neighbors in $R$ is exponentially small (i.e., $2^{-\mathsf{poly}(n)}$). The upper bound on neighborhood size is identical.
    \end{proof}

    We then observe that, so long as $R$ remains nearly regular, we can find induced copies of $H$ within $R$ making use of a substantial fraction of the vertices. 

    \begin{claim}\label{clm:stay-reg=good}
        If the subgraph induced by $R$ has $|E(R)| \geq 2 \cdot \ex(n, \{K_{s,s}, H\ind\})$, and maximum degree at most a $4 \Delta^2$ factor larger than minimum degree, then for at least $\frac{|V(R)|}{8\Delta^2}$ vertices $u \in R$, there exists an induced embedding $\pi$ of $H$ in $R$ such that $\pi(v) = u$.
    \end{claim}
    \begin{proof}
        Suppose there are fewer than $\frac{|V(R)|}{8\Delta^2}$ vertices of $R$ that serve as the image of $v$ under some embedding of $H$. Since the graph induced by $R$ has maximum degree at most $\frac{4\Delta^2 \cdot |E(R)|}{|V(R)|}$, the subgraph induced by all vertices of $R$ other than those contains at least $|E(R)| - \left(\frac{|V(R)|}{8\Delta^2}\right)\left( \frac{4\Delta^2 \cdot |E(R)|}{|V(R)|}\right) = \frac{|E(R)|}{2} \geq \ex(n, \{K_{s,s}, H\ind\})$ edges. So, it must contain an induced copy of $H$, which must have some vertex corresponding to $v$, which is a contradiction since we have removed all possible images.
    \end{proof}

    As long as $C' \ge 2k\Delta$, \cref{clm:stay-reg} and \cref{clm:stay-reg=good} together guarantee that our random process has success probability at least $\frac{1}{10 \Delta^2}$ for sufficiently large $n$, since once we condition on an event with probability going to $1$ in $n$, a random vertex in $R$ has at least a $\frac{|V(R)|}{8\Delta^2}$ chance of leading to success.\\
    
    But now, note that we could alternatively have performed the procedure in the following order:
    
    \begin{enumerate}
        \item Choose a uniform random vertex $u \in V(G')$.
        \item Choose a uniform random partition of all other vertices of $G$ into $k$ equal-sized color classes.
        \item Choose one of the colors uniformly at random to call $R$.
        \item Declare the procedure to have succeeded if there exists an induced embedding $\pi$ of $H$ in $R \cup \{u\}$ such that $\pi(v) = u$.
    \end{enumerate}

    Up to an additive difference of $1$ in the number of vertices chosen in $R$, which affects the distribution negligibly, this process gives the same distribution over $R$ and $u$ as the one originally specified, and so has the same success probability of at least $\frac{1}{10 \Delta^2}$. By averaging, there exists some way to perform steps i and ii such that the process still has success probability at least $\frac{1}{10 \Delta^2}$ over step iii. This means that, for some vertex $u \in V(G')$ and some partition into $k$ colors, at least $\frac{k}{10\Delta^2}$ of the color classes contain an induced copy of $H$ mapping $v$ to $u$. Since each of these copies are (aside from $u$) of different vertex colors and hence disjoint, choosing $k = 10 \Delta^2 t$ gives the desired statement.
\end{proof}

\begin{proof}[Proof of \cref{thm:wlog-2core}]
    The statement is already known for trees~\cite{induced-trees, main-paper}, and so \cref{prop:wlog-connected} immediately implies it for forests. \cref{prop:wlog-connected} also allows us to remove any isolated vertices without affecting the induced extremal number by more than an additive $O(n)$. Thus, we need only consider the case where $H$ is a connected graph containing a cycle. Proceeding by induction on the number of edges of $H$, it suffices to show for all such $H$ that $\ex(n, \{K_{s,s}, H^+\ind) \leq O(\ex(n, \{K_{s,s}, H\ind))$, where $H^+$ is obtained from $H$ by adding a single vertex $u$, and a single edge $(u,v)$ to some $v \in V(H)$. That is, we will show that adding a single degree-1 vertex to $H$ cannot change its induced extremal number by more than a constant multiplicative factor. \\
    
    By \cref{lem:flower}, there exists some $C$ such that any $n$-vertex graph with at least $C \cdot \ex(n, \{K_{s,s}, H\ind\})$ edges must contain $s$ copies of $H$, any pair of which overlap exactly on the vertex corresponding to $v$. Let $G$ be a graph with at least $2C\cdot \ex(n, \{K_{s,s}, H\ind\}) + 2\left(s(|V(H)|-1)^s + s(|V(H)|-1)\right)n$ edges. By repeatedly removing vertices with degree less than half the average, we can find an induced subgraph $G' \subseteq G$ with minimum degree at least $\frac{C}{n}\ex(n, \{K_{s,s}, H\ind\}) + s(|V(H)|-1)^s + s(|V(H)|-1)$. Now, by \cref{lem:flower}, we can find induced copies $H_1, \dots, H_s$ of $H$ in $G'$ that all overlap only on $\pi(v)$, where $\pi(v)$ is the image of $v$ in all copies. By our bound on minimum degree, we know that $\pi(v)$ has degree at least $s(|V(H)|-1)^s + (|V(H)|-1)$, and hence has at least $s(|V(H)|-1)^s$ neighbors $u_{1}, \dots, u_{\left( s(|V(H)|-1)^s \right)}$ not contained in any of our identified $H_i$. \\
    
    If $G'$ contains no induced copy of $H^+$, then for every $i, j$, there must be an edge between $u_i$ and some vertex of $H_j \setminus \{\pi(v)\}$; choose an arbitrary such edge for each $i,j$. There are only $(|V(H)|-1)^s$ ways to choose one element of $H_j \setminus \{\pi(v)\}$ for each $j$, so by the pigeonhole principle there must exist a set of $s$ many $u_i$'s for which we have chosen the exact same $s$-tuple of neighbors. These vertices and that $s$-tuple of neighbors form a copy of $K_{s,s}$.
\end{proof}

\begin{corollary}
    If $H$ is any bipartite graph of girth at least $2k$ such that each connected component contains at most a single cycle, then for any $s$ we have $\ex(n, \{K_{s,s}, H\}) \leq O(n^{1 + 1/k})$.
\end{corollary}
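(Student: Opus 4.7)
The plan is to chain together the two reduction results of this paper with the known bound on induced even cycles from \cite{main-paper}. First, I would decompose $H$ into its connected components $H_1, \dots, H_m$ and apply \cref{prop:wlog-connected} iteratively to obtain
\[ \ex(n, \{K_{s,s}, H\ind\}) \leq O\!\left(\sum_{i=1}^m \ex(n, \{K_{s,s}, H_i\ind\}) + n\right), \]
so that it suffices to handle each connected component separately.

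Next, for each component $H_i$, I would invoke \cref{thm:wlog-2core} to reduce to bounding $\ex(n, \{K_{s,s}, 2\core(H_i)\ind\})$. Since $H_i$ is connected and contains at most one cycle by hypothesis, $H_i$ is either a tree or unicyclic. In the tree case, $2\core(H_i)$ is empty, and the reduction contributes only $O(n)$. In the unicyclic case, $2\core(H_i)$ is precisely the unique cycle: iteratively peeling off degree-1 vertices from a unicyclic graph never disturbs the cycle and eventually leaves exactly the cycle. Because $H$ is bipartite this cycle is even, and because $H$ has girth at least $2k$ the cycle has length at least $2k$, so $2\core(H_i) = C_{2\ell_i}$ for some integer $\ell_i \geq k$.

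Finally, I would apply the result from \cite{main-paper} that $\ex(n, \{K_{s,s}, C_{2\ell}\ind\}) \leq O(n^{1+1/\ell})$ for each $\ell$. Since $\ell_i \geq k$ gives $n^{1+1/\ell_i} \leq n^{1+1/k}$, and since $m$ depends only on $H$ (which we treat as a constant), summing the contributions from all components yields
\[ \ex(n, \{K_{s,s}, H\ind\}) \leq O(n^{1+1/k}), \]
as desired.

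There is no real obstacle here: the corollary is essentially a direct corollary of the structural reductions proved earlier in this section together with the cycle bound from \cite{main-paper}. The only substantive observation needed beyond citing those results is the elementary graph-theoretic fact that the 2-core of a connected unicyclic graph is exactly its unique cycle.
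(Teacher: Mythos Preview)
Your proposal is correct and follows essentially the same approach as the paper: both combine the cycle bound of \cite{main-paper} with \cref{thm:wlog-2core} and \cref{prop:wlog-connected}, the only cosmetic difference being that you first split into components and then pass to the $2$-core, whereas the paper first applies \cref{thm:wlog-2core} to each connected unicyclic piece and then invokes \cref{prop:wlog-connected} to combine them.
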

\begin{proof}
    Hunter, Milojevi\'c, Sudakov and Tomon have shown that that $\ex(n, \{K_{s,s}, C_{2\ell}) \leq O(n^{1+1/\ell})$ for all $\ell$~\cite{main-paper}. By \cref{thm:wlog-2core}, this gives that $\ex(n, \{K_{s,s}, F) \leq O(n^{1+1/k})$ for any $F$ whose $2$-core is $C_{2\ell}$, we have $\ell \geq k$ -- in other words, any bipartite graph of girth at least $2k$ containing only a single cycle. Then, by \cref{prop:wlog-connected}, the disjoint union $H$ of any constant number of such graphs must also have $\ex(n, \{K_{s,s}, H\}) \leq O(n^{1 + 1/k})$.
\end{proof}

\section{Control by degeneracy}\label{sec:degeneracy}

The results of \cref{sec:2core} give us a new class of graphs where we have upper bounds on induced extremal numbers matching those for standard extremal numbers. However, we still have no relationship between induced and standard extremal numbers in general. Short of a proof of \cref{conj:connected}, it would be useful to at least rule out that $\ex(n, \{K_{s,s}, H\ind\})$ can be arbitrarily large in terms of $\ex(n,H)$. That is, we would like to show that for every $\varepsilon$ there exists a $\delta$ such that, for all $H$, if $\ex(n,H) \leq n^{2 - \varepsilon}$, then $\ex(n, \{K_{s,s}, H\ind\}) \leq n^{2-\delta}$. In this section, we will obtain such a result.\\

The relevant fact is that, while we know no general way of computing extremal numbers, the degeneracy of $H$ offers a reasonable approximation, giving both lower and upper bounds. Specifically, we know that, for some constants $c > k$, for any $r$ and any bipartite $H$ of degeneracy $r$, we have $\Omega(n^{1 - 1/kr}) \leq \ex(n, H) \leq O(n^{1 - 1/cr})$. (The current best known values of $c$ and $k$ are $1/2$ and $4$, respectively, although it is conjectured that these can be improved~\cite{aks-degenerate, erdos-favorites}.) An upper bound on \emph{induced} extremal numbers in terms of degeneracy would therefore allow us to constrain the induced extremal number of a graph in terms only of its non-induced extremal number. In \cref{sec:weak-sauce}, we will show a bound of the form $\ex(n, \{K_{s,s}, H\ind\}) \leq O(n^{2 - 1/cr^c})$, which will establish some such relationship. To strengthen that relationship, it would be interesting to show a bound of the form $\ex(n, \{K_{s,s}, H\ind\}) \leq O(n^{2 - 1/cr})$; in \cref{sec:strong-sauce} we discuss partial progress towards such a stronger quantitative bound.

\subsection{Bounding induced extremal numbers in terms of degeneracy }\label{sec:weak-sauce}

As in the argument for standard extremal numbers, our upper bound relies on the technique of dependent random choice. In this section, we will be able to use directly the following standard result, whereas in the next section we will have to unfold its proof to obtain some stronger guarantees:

\begin{lemma}[\cite{aks-degenerate}, \cite{fox2011dependent}]\label{lem:vanilla-drc}
    For any $r, t \geq 2$, and any $n$-vertex graph $G$ with at least $n^{2-1/(t^3r)}$ edges, there exist nonempty subsets $U_1, U_2 \subseteq V(G)$ such that every $r$-tuple of (not necessarily distinct) vertices in $U_1$ has at least $n^{1 - 1.8/t}$ common neighbors in $U_2$, and likewise every $r$-tuple of vertices in $U_2$ has at least $n^{1-1.8/t}$ common neighbors in $U_1$.
\end{lemma}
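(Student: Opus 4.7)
The statement is the standard two-sided dependent random choice (DRC) lemma, so the plan is to iterate one-sided DRC in two stages and combine the two conclusions.

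First, I would sample a uniformly random $s_1$-tuple $T_1 \subseteq V(G)$ (with replacement) for some $s_1 = O(rt)$, and let $W_1 = \bigcap_{v \in T_1} N(v)$. By convexity, $\mathbb{E}|W_1| = \sum_v (\deg(v)/n)^{s_1} \geq n(d/n)^{s_1}$, where $d = 2e(G)/n$ is the average degree; the hypothesis gives $d \geq n^{1 - 1/(t^3 r)}$, so $\mathbb{E}|W_1| \geq n^{1 - s_1/(t^3 r)}$. For any $r$-tuple $R \subseteq V(G)$ with fewer than $m := n^{1 - 1.8/t}$ common neighbors, $\Pr[R \subseteq W_1] \leq (m/n)^{s_1}$, so the expected number of such bad $r$-tuples contained in $W_1$ is at most $n^r(m/n)^{s_1} = n^{r - 1.8 s_1/t}$; taking $s_1 \approx rt$ makes this sub-constant while keeping $\mathbb{E}|W_1|$ polynomial in $n$. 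Deleting one vertex per bad tuple leaves $U_1 \subseteq V(G)$ of polynomial size in which every $r$-tuple has at least $m$ common neighbors in $V(G)$.

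Next, I would apply DRC again on the bipartite graph between $U_1$ and $V(G)$. Applying the co-degree guarantee to the constant ``tuple'' $(u, u, \dots, u)$ shows $\deg_G(u) \geq m$ for each $u \in U_1$, so this bipartite graph has $V$-side average degree at least $|U_1| m / n$. I would then sample $T_2 \subseteq U_1$ of size $s_2 = O(rt)$ uniformly and set $U_2 = \bigcap_{u \in T_2} N(u)$; the analogous DRC analysis gives $U_2$ nonempty, with (after pruning) every $r$-tuple in $U_2$ having $\geq m$ common neighbors in $U_1$. For the reciprocal property, I would use the identity $|N(R) \cap U_2| = |N(R \cup T_2)|$ for $R \subseteq U_1$ of size $r$: this is the common neighborhood of an $(r + s_2)$-tuple inside $U_1$, so it suffices to strengthen Stage 1 to produce a $U_1$ in which every $(r + s_2)$-tuple (not merely every $r$-tuple) has $\geq m$ common neighbors in $V(G)$, which only slightly weakens the size bound on $|U_1|$.

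The main obstacle will be balancing $s_1$ and $s_2$: each must be large enough to drive the corresponding count of bad tuples to sub-constant, and small enough to keep $\mathbb{E}|W_1|$ and $\mathbb{E}|U_2|$ polynomial in $n$. The specific exponents in the statement---$n^{2 - 1/(t^3 r)}$ in the hypothesis and $n^{1 - 1.8/t}$ in the conclusion---look calibrated exactly for this purpose, with the slack between $1.8$ and $2$ providing the cushion needed to run two DRC stages in series and to absorb the $O(s_2)$ enlargement of the tuples tracked in Stage 1.
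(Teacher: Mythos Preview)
Your two-stage DRC plan is exactly the approach the paper has in mind (the paper cites this lemma but proves the closely related \cref{lem:eel-drc} with the same structure, taking $s_1 = 3t^2r$ and tracking $(r+s_2)$-tuples in Stage~1 so that the reciprocal property follows from $|N(R)\cap U_2| = |N(R\cup T_2)|$). One small fix: pruning $U_2$ would break that identity, so instead drive the expected number of bad Stage-2 tuples below $1$ and choose $T_2$ with no bad tuples, leaving $U_2 = \bigcap_{u\in T_2} N(u)$ unpruned --- its nonemptiness is then automatic from the Stage-1 guarantee applied to the $s_2$-tuple $T_2$ itself (and note this forces $s_1 = \Theta(rt^2)$ rather than $\Theta(rt)$, which is still within the $n^{2-1/(t^3r)}$ budget).
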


We will also make use of the fact that, in a $K_{s,s}$-free graph, at most a constant number of vertices are neighbors with a constant fraction of any sufficiently large vertex set.

\begin{lemma}\label{lem:electrocute}
    For any $\varepsilon > 0$, any $K_{s,s}$-free graph $G$, and any vertex subset $S \subseteq V(G)$ with $|S| \geq \frac{2s}{\varepsilon}$, there exist at most $\left(\frac{2s}{\varepsilon}\right)^s$ vertices $v \in V(G)$ such that $|N(v) \cap S| \geq \varepsilon |S|$.
\end{lemma}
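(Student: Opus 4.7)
The plan is a standard $K_{s,s}$-saturation argument via double counting. Let $T = \{v \in V(G) : |N(v) \cap S| \geq \varepsilon |S|\}$ denote the set of ``heavy'' vertices, and suppose for contradiction that $|T| > (2s/\varepsilon)^s$. I would count the number of pairs $(v, A)$ with $v \in T$, $A \in \binom{S}{s}$, and $A \subseteq N(v)$, and then derive a contradiction by comparing an upper bound from $K_{s,s}$-freeness with a lower bound from the heaviness of each $v \in T$.

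For the upper bound, if a single $s$-set $A = \{u_1,\dots,u_s\} \subseteq S$ were contained in the neighborhoods of $s$ distinct vertices $v_1,\dots,v_s \in T$, then $\{v_1,\dots,v_s\}$ together with $A$ would form a $K_{s,s}$ subgraph of $G$: the two sides must be disjoint, since any collision $v_i = u_j$ would require a loop $v_i v_i \in E(G)$. Hence each $A$ is counted at most $s-1$ times, giving a pair count of at most $(s-1)\binom{|S|}{s}$. For the lower bound, each $v \in T$ contributes at least $\binom{\varepsilon|S|}{s}$ such pairs, so the total is at least $|T|\binom{\varepsilon|S|}{s}$. Combining yields
\[|T| \;\leq\; (s-1)\,\frac{\binom{|S|}{s}}{\binom{\varepsilon|S|}{s}} \;=\; (s-1)\prod_{i=0}^{s-1} \frac{|S|-i}{\varepsilon|S|-i}.\]

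To finish I would bound this product termwise. The hypothesis $|S| \geq 2s/\varepsilon$ gives $\varepsilon|S| \geq 2s$, so $\varepsilon|S| - i \geq \varepsilon|S|/2$ for every $0 \leq i \leq s-1$, making each factor at most $2/\varepsilon$. Thus $|T| \leq (s-1)(2/\varepsilon)^s \leq (2s/\varepsilon)^s$, contradicting the assumption. The proof is essentially routine; the only real care needed is to use the threshold $|S| \geq 2s/\varepsilon$ precisely to control the denominator factors, and to notice that $G$ being a simple graph is what forces the two sides of the would-be $K_{s,s}$ to be automatically disjoint, so no extra disjointness argument is required.
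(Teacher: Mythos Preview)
Your proof is correct and follows essentially the same K\H{o}v\'ari--S\'os--Tur\'an idea as the paper. The only difference is cosmetic: the paper cites the asymmetric K\H{o}v\'ari--S\'os--Tur\'an bound as a black box (after first separating the heavy vertices lying in $S$ from the rest and subtracting $\binom{|S|}{2}$ to account for possible edges inside $S$), whereas you carry out the underlying double-counting directly and dispose of the disjointness issue via the no-loops observation.
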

\begin{proof}
    Suppose there exist $x = \left(\frac{2s}{\varepsilon}\right)^s$ vertices $\{v_1,\dots,v_x\} \subseteq V(G)$ such that $|N(v_i) \cap S| \geq \varepsilon |S|$. Taking $S$ on one side, and these vertices $\{v_1, \dots, v_x\} \setminus S$ on the other, we can define a bipartite subgraph with part sizes at most $|S|$ and $x$, and at least $x \cdot \varepsilon |S| - \binom{|S|}{2} \geq 2s x^{1-1/s} |S| - \left(\frac{2s}{\varepsilon}\right)^2 \geq s^{1/s} x^{1-1/s} |S| + sx$ edges. 
    The asymmetric version of the K\H{o}v\'ari--S\'os--Tur\'an theorem guarantees that any bipartite graph with part sizes $\ell$ and $r$, and at least $ s^{1/s}r^{1-1/s}\ell + sr$ edges, must contain a copy of $K_{s,s}$~\cite{kst, zarankiewicz-2}.
    So, we have found a $K_{s,s}$ in $G$, which is contradiction.    
\end{proof}

In order to find an induced embedding, we will first apply \cref{lem:vanilla-drc}, then use \cref{lem:electrocute} to show that an appropriately-chosen random embedding of our pattern graph $H$ in the resulting pair of subsets will be an induced copy with high probability.

\degeneracy
\begin{proof}
    Let $H$ be an $r$-degenerate graph, and let $G$ be a $K_{s,s}$-free $n$-vertex graph with at least $n^{2 - 1/(20r^4)}$ edges. Applying \cref{lem:vanilla-drc} with $t = 2.71 r$, we obtain two vertex subsets $U_1, U_2 \subseteq V(G)$ such that every $r$-tuple in one subset has at least $n^{1 - 1/1.51r}$ common neighbors in the other.\\

    Let $v_1, \dots, v_{|V(H)|}$ be an ordering of the vertices of $H$ such that, for all $i$, vertex $v_i$ has at most $r$ neighbors $v_j$ with $j < i$. Such an ordering is guaranteed to exist by the assumption that $H$ is $r$-degenerate. We will consider embedding the vertices of $H$ one-at-a-time in that order, examining a large number of possible ways to do so. Specifically, for any tuple of numbers $w = (w_1, \dots, w_{|V(H)|}) \in [n^{1 - 1/1.51r}]^{|V(H)|}$, we will define an associated embedding $\pi_w: V(H) \to V(G)$. \\
    
    Fix an arbitrary ordering of the vertices in $U_1$ and $U_2$, respectively. To define the embedding of $v_i$, we will take the $w_i$th ``available'' option in this ordering, ensuring edges exist to $v_i$'s already-embedded neighbors. That is, for each $v_i$ in the left (resp.\ right) part of $H$, let $\pi_w(v_i)$ be the $w_i$th vertex of the common neighborhood $\bigcap_{j < i\colon (v_i, v_j) \in E(H)} N(\pi_w(v_j))$ in the ordering of $U_1$ (resp.\ $U_2$). Since each $v_i$ has at most $r$ earlier neighbors in the embedding, and any $r$ vertices in one $U_i$ have at least $n^{1-1/1.51r}$ common neighbors, the $w_i$th vertex defined thus will always exist. The image of each $\pi_w$ is a homomorphic copy of $H$: if $(v_i, v_j) \in E(H)$, then $(\pi_w(v_i),\pi_w(v_j)) \in E(G)$, since whichever of $v_i$ and $v_j$ is later in the ordering will be chosen from the neighborhood of the other. We will show that when $w$ is chosen uniformly at random among all elements of $[ n^{1 - 1/1.51r}]^{|V(H)|}$, the associated homomorphic copy of $H$ has nonzero probability of being an induced subgraph. \\

    To do so, we will define a notion of neighborhoods having large overlap, and claim that this is unlikely to occur. For any $r$-tuple $u_1, \dots, u_r \in U_j$, we define the set $A = A(u_1, \dots, u_r)$ to be the first $n^{1-1/1.51r}$ vertices of $\bigcap_i N(u_i)$ in the ordering of the other part of $U_{2-j}$. Note that our dependent random choice guarantees $|\bigcap_i N(u_i)| \geq n^{1-1/1.51r}$, so $A$ is well-defined.
    Now, for any other vertex $v \not\in \{u_1, \dots, u_r\}$, we say that $v$ \defn{electrocutes} $u_1, \dots, u_r$ if $v$ is adjacent to a large fraction of $A$. That is, $v$ electrocutes $u_1, \dots, u_r$ if we have $|N(v) \cap A| \geq \frac{1}{100 |V(H)|^2} \cdot n^{1-1/1.51r}$. Call a set $T \subseteq V(G)$ \defn{slippery} if some $r$-tuple $(u_1, \dots, u_r) \in T^r$ of the vertices is electrocuted by another vertex $v \in T$, $v \not\in \{u_1, \dots, u_r\}$.
        
    \begin{claim}\label{clm:not-slippery}
        If $n$ is sufficiently large, and $w$ is chosen uniformly from $[n^{1 - 1/1.51r}]^{|V(H)|}$, the image $\text{Im}_{\pi_w}(V(H))$ is slippery with probability at most $\frac{1}{100 |V(H)|^2}$.
    \end{claim}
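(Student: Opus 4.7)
The plan is to prove the claim via a direct union bound over all potential witnesses of slipperiness. Define a \emph{witness} to be a tuple $(v; u_1, \dots, u_r) \in V(G)^{r+1}$ with $v \notin \{u_1, \dots, u_r\}$ such that $v$ electrocutes $(u_1, \dots, u_r)$. By definition, $\text{Im}_{\pi_w}(V(H))$ is slippery if and only if it contains some witness, so $\Pr[\text{slippery}] \leq \sum_{\text{witnesses}} \Pr[\text{witness} \subseteq \text{Im}_{\pi_w}(V(H))]$, and it suffices to bound the number of witnesses and the probability of a given witness appearing in the image.

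First, I will count witnesses. Since $|A(u_1,\dots,u_r)| = n^{1-1/1.51r}$ exceeds $2s/\varepsilon$ with $\varepsilon = 1/(100|V(H)|^2)$ for $n$ sufficiently large, \cref{lem:electrocute} bounds the number of electrocutors of any fixed $r$-tuple by a constant $c := (200 s|V(H)|^2)^s$; hence there are at most $c\cdot n^r$ witnesses in total. Next, I will bound the probability that a fixed witness $(v; u_1, \dots, u_r)$ --- WLOG with distinct entries --- lies in the image, by taking a further union bound over the at most $|V(H)|^{r+1}$ ways to assign $r+1$ distinct pattern vertices to play its roles. For each such assignment, the event that $\pi_w$ takes the required values on those pattern vertices is equivalent (conditional on the other coordinates of $w$) to pinning $r+1$ specific coordinates of $w$; since $w$ is drawn from a product of independent uniforms on $[n^{1-1/1.51r}]$, this has probability at most $n^{-(r+1)(1-1/1.51r)}$. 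Multiplying, the slipperiness probability is bounded by
\[
O_{s,H}(1)\cdot n^{r - (r+1)(1-1/1.51r)} \;=\; O_{s,H}\bigl(n^{-(0.51r-1)/1.51r}\bigr),
\]
which for $r \geq 2$ decays polynomially in $n$ and so falls below $1/(100|V(H)|^2)$ for $n$ large enough.

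The main subtlety will be justifying that the $r+1$ coordinate constraints on $w$ really do multiply as if independent. The point is that although $w \mapsto \pi_w$ is defined sequentially --- $\pi_w(v_i)$ depends on $w_1,\dots,w_i$ --- the dependent random choice guarantees that the common neighborhood from which $\pi_w(v_i)$ is drawn always has size at least $n^{1-1/1.51r}$, so conditioning on the earlier $w_j$'s, requiring $\pi_w(v_i) = u$ for a specific $u$ pins $w_i$ to at most one of $n^{1-1/1.51r}$ equally likely values, and iterating gives the claimed product bound. A secondary but routine technical matter is that the $u_i$'s in the definition of electrocution may coincide; such witnesses involve strictly fewer distinct actual vertices, so both their count and their embedding probability shrink in tandem, contributing only lower-order terms to the union bound.
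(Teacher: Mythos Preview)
Your argument is correct and is essentially the paper's own proof, just with the double-counting organized in the reverse order: the paper enumerates bad $w$'s by first choosing indices $(i,j_1,\dots,j_r)$, then the images of the $v_{j_\ell}$'s (costing $n^k$), then the remaining coordinates, then pinning $w_i$ via \cref{lem:electrocute}; you instead enumerate witnesses in $G$ first and then union-bound over index assignments. Both routes yield the identical exponent $(1-0.51r)/(1.51r)$.

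One small correction to your last paragraph: when the $u_i$'s have only $k<r$ distinct values, the embedding probability does \emph{not} shrink --- it grows from $n^{-(r+1)(1-1/1.51r)}$ to $n^{-(k+1)(1-1/1.51r)}$. What makes these terms lower-order is that the witness count shrinks faster, from $n^r$ to $n^k$, so the product $n^{k-(k+1)(1-1/1.51r)} = n^{(k+1)/(1.51r)-1}$ is strictly smaller for $k<r$. Your conclusion stands, but the stated reason needs this adjustment.
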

    \begin{proof}
        Any $w$ such that $\text{Im}_{\pi_w}(V(H))$ is slippery can be specified as follows:
        \begin{enumerate}
            \item Choose an index $i$, and indices $j_1, \dots, j_r$.
            \item Choose $r$ vertices from $V(G)$ to serve as $\pi_w(v_{j_1}), \dots, \pi_w(v_{j_r})$.
            \item Choose the entries $w_\ell$ for $\ell \not\in \{i, j_1, \dots, j_r\}$.
            \item Choose the entry $w_i$, ensuring that $\pi_w(v_i)$ electrocutes $\pi_w(v_{j_1}), \dots, \pi_w(v_{j_r})$.
            \item Set $w_{j_1}, \dots, w_{j_r}$ to be the unique values such that $\pi_w(v_{j_1}), \dots, \pi_w(v_{j_r})$ correspond to the vertices chosen on step ii.
        \end{enumerate}
        By upper bounding the number of available choices at each step, we can obtain an upper bound on the number of $w$ such that $\text{Im}_{\pi_w}(V(H))$ is slippery.
        \begin{enumerate}
            \item There are at most $|V(H)|^{r+1}$ ways to choose the indices
            \item There are at most $n^k$ ways to choose $\pi_w(v_{j_1}), \dots, \pi_w(v_{j_r})$, where $k$ is the number of distinct elements appearing among $j_1, \dots, j_r$ (note that the $r$-tuple of indices may contain repeated elements).
            \item There are at most $(n^{1 - 1/1.51r})^{|V(H)| - k - 1}$ ways to choose $w_\ell$ for $\ell \not\in \{i, j_1, \dots, j_r\}$.
            \item By \cref{lem:electrocute}, so long as $n^{1-1/1.51r} \geq 200s |V(H)|^2$, there are at most $\left(200s |V(H)|^2\right)^s$ vertices that electrocute $\pi_w(v_{j_1}),\dots, \pi_w(v_{j_r})$.
        \end{enumerate}
        Overall, this means that the number of $w$ such that $\text{Im}_{\pi_w}(V(H))$ is slippery is at most 
        \begin{align*}
            &|V(H)|^{r+1} \cdot n^k \cdot (n^{1-1/1.51r})^{|V(H)| - k - 1} \cdot \left(200s |V(H)|^2\right)^s\\
            &= (n^{1-1/1.51r})^{\left(|V(H)| - k - 1 + \frac{k}{1 - 1/1.51r}\right)} \cdot \left(200s\right)^s|V(H)|^{2s + r + 1}\\
            &\leq (n^{1-1/1.51r})^{|V(H)|} \cdot n^{\left(\frac{1 - .51 r}{1.51r} \right)} \cdot \left(200s\right)^s|V(H)|^{2s + r + 1}.
        \end{align*}
        Since we know any $1$-degenerate $H$ has extremal number $O(n)$, we can assume $r > 1$, in which case $\left(\frac{1 - .51 r}{1.51r} \right) < 0$. So, for any constant values of $s$, $r$, and $|V(H)|$, for $n$ sufficiently large we have $n^{-\left(\frac{1 - .51 r}{1.51r} \right)} > 100 \left(200s\right)^s|V(H)|^{2s + r + 3}$, which means that the number of $w$ such that $\text{Im}_{\pi_w}(V(H))$ is slippery is at most $\frac{(n^{1-1/1.51r})^{|V(H)|}}{100 |V(H)|^2}$. Since the total number of $w$ is exactly $(n^{1-1/1.51r})^{|V(H)|}$, the slippery tuples represent less than a $\frac{1}{100 |V(H)|^2}$ fraction.
    \end{proof}

    We can now show that any particular non-edge of $H$ is very unlikely to be present in $\pi_w(H)$.
    
    \begin{claim}
        Let $v_i, v_j \in V(H)$ be any pair of vertices such that $(v_i, v_j) \not\in E(H)$. If $n$ is sufficiently large, and $w$ is chosen uniformly from $[n^{1 - 1/1.51r}]^{|V(H)|}$, then $\Pr_w[(\pi_w(v_i), \pi_w(v_j)) \in E(G)] \leq \frac{1}{50 |V(H)|^2}$.
    \end{claim}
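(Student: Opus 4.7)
The plan is to condition on the random choices defining the prefix of the embedding, and then case-split on whether a certain electrocution event happens. Assume without loss of generality that $i < j$ in the degeneracy ordering, so that $v_j$ is embedded after $v_i$. First I would condition on $w_1,\dots,w_{j-1}$, which completely determines the partial embedding $\pi_w(v_1),\dots,\pi_w(v_{j-1})$ — in particular fixing $\pi_w(v_i)$ — and also fixes the set $A = A(\pi_w(v_{j_1}),\dots,\pi_w(v_{j_r}))$ of size exactly $n^{1-1/1.51r}$ from which $\pi_w(v_j)$ is drawn as its $w_j$th element. Here $v_{j_1},\dots,v_{j_r}$ denotes the (at most $r$) earlier neighbors of $v_j$ in the degeneracy order, padded to an $r$-tuple by repetition if fewer than $r$. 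Under this conditioning, the conditional probability that $(\pi_w(v_i),\pi_w(v_j)) \in E(G)$ equals $|N(\pi_w(v_i)) \cap A| / |A|$, since $w_j$ is drawn uniformly from $[|A|]$ independently of the prefix.

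Next I would split on whether $\pi_w(v_i)$ electrocutes the $r$-tuple $(\pi_w(v_{j_1}),\dots,\pi_w(v_{j_r}))$. If it does not, then by the definition of electrocution we have $|N(\pi_w(v_i)) \cap A| < \tfrac{|A|}{100|V(H)|^2}$, so the conditional edge probability is bounded by $\tfrac{1}{100|V(H)|^2}$. If it does, then since $v_i$ is non-adjacent to $v_j$ in $H$ we have $v_i \notin \{v_{j_1},\dots,v_{j_r}\}$ as a vertex of $H$; assuming additionally that $\pi_w(v_i)$ is distinct from all the $\pi_w(v_{j_\ell})$ as vertices of $G$, this witnesses the image of $H$ as slippery, and so by \cref{clm:not-slippery} this electrocution event has probability at most $\tfrac{1}{100|V(H)|^2}$. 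Taking the tower expectation over the prefix and union-bounding the two cases gives the claimed bound of $\tfrac{2}{100|V(H)|^2} = \tfrac{1}{50|V(H)|^2}$.

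The main obstacle I anticipate is the degenerate sub-case where $\pi_w(v_i) = \pi_w(v_{j_\ell})$ as a vertex of $G$ for some $\ell$: here $\pi_w(v_i)$ fails to ``electrocute'' in the formal sense required by the definition of slippery, yet the edge $(\pi_w(v_i),\pi_w(v_j)) = (\pi_w(v_{j_\ell}),\pi_w(v_j))$ is automatically present in $G$. I would handle this by bounding the collision probability separately using the same style of counting as in \cref{clm:not-slippery}: enforcing $\pi_w(v_i) = \pi_w(v_{j_\ell})$ couples the coordinates $w_i$ and $w_{j_\ell}$, so the number of offending $w$ is a factor of roughly $n^{-(1-1/1.51r)}$ smaller than the total, which for $n$ sufficiently large is negligible compared to $\tfrac{1}{100|V(H)|^2}$ and can be absorbed by slightly shrinking the threshold constant defining electrocution.
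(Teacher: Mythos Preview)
Your proposal is correct and follows essentially the same line as the paper: condition on $w_1,\dots,w_{j-1}$, observe that $\pi_w(v_j)$ is then uniform in $A=A(\pi_w(v_{j_1}),\dots,\pi_w(v_{j_r}))$ where $v_{j_1},\dots,v_{j_r}$ are the earlier neighbours of $v_j$, split on whether $\pi_w(v_i)$ electrocutes this tuple, and bound the electrocution branch via \cref{clm:not-slippery}.

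Your extra third paragraph is in fact \emph{more} careful than the paper, which silently ignores the possibility $\pi_w(v_i)\in\{\pi_w(v_{j_1}),\dots,\pi_w(v_{j_r})\}$. However, the fix can be made much lighter than you outline: this event is a collision $\pi_w(v_i)=\pi_w(v_{j_\ell})$ between two distinct indices of $H$, and by the same one-line argument used for injectivity at the end of the proof it has probability at most $r/n^{1-1/1.51r}$, which for large $n$ is absorbed into the $\tfrac{1}{100|V(H)|^2}$ slack. There is no need to revisit the counting from \cref{clm:not-slippery} or to tweak the electrocution threshold.
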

    \begin{proof}
        We can assume without loss of generality that $i < j$. Let the tuple $u_1, \dots, u_r$ contain the neighbors of $v_i$ in $H$ that appear earlier in the degeneracy ordering (repeat a vertex in the tuple if there are fewer than $r$ such distinct vertices). If we fix random values for $w_1, \dots, w_{j-1}$, then this will in particular fix the embeddings $\pi_w(u_1), \dots, \pi_w(u_r)$ for all of those neighbors, as well as the embedding $\pi_w(v_i)$. \\
        
        Let $A = A(\pi_w(u_1),\dots, \pi_w(u_r))$; choosing a random value for $w_j$ will correspond to fixing $\pi_w(v_j)$ to be a uniform random element of $A$. By definition, unless $\pi(v_i)$ electrocutes $\pi_w(u_1), \dots, \pi_w(u_r)$, at most a $\frac{1}{100|V(H)|^2}$ fraction of the vertices of $A$ are adjacent to $\pi_w(v_i)$, so conditional on $\pi(v_i)$ not electrocuting $\pi_w(u_1), \dots, \pi_w(u_r)$ we have $\Pr_{w_j}[(\pi_w(v_i), \pi_w(v_j)) \in E(G)] \leq \frac{1}{100 |V(H)|^2}$. If $\pi(v_i)$ electrocutes $\pi_w(u_1), \dots, \pi_w(u_r)$, then $w$ is slippery --- so \cref{clm:not-slippery} ensures that this occurs with probability at most $\frac{1}{100|V(H)|^2}$. By union bound, this means $\Pr_w[(\pi_w(v_i), \pi_w(v_j)) \in E(G)] \leq \frac{1}{100 |V(H)|^2}+\frac{1}{100 |V(H)|^2}= \frac{1}{50 |V(H)|^2}$.
    \end{proof}

    Finally, we note that $\pi_w$ is injective with high probability: for any $i < j \leq |V(H)|$, we have $\Pr_{w}[\pi_w(v_i) = \pi_w(v_j)] \leq \frac{1}{n^{1 - 1/1.51}}$, because when $w_j$ is chosen, there are $n^{1-1/1.51}$ options, at most one of which corresponds to $\pi_w(v_i)$.\\

    Now, the probability that $\text{Im}_{\pi_w}(V(H))$ fails to be an induced copy of $H$ is by union bound at most 
    \begin{align*}
        &\left(\sum_{i <j} \Pr_{w}[\pi_w(v_i) = \pi_w(v_j)]\right) + \left(\sum_{i <j\colon (v_i, v_j) \not\in E(H)} \Pr[(\pi_w(v_i), \pi_w(v_j)) \in E(G)] \right) \\
        &\leq |V(H)|^2\cdot \frac{1}{n^{1 - 1/1.51}} + |V(H)|^2\cdot \frac{1}{50 |V(H)|^2}\\
        &\leq \frac{1}{25}
    \end{align*}
    for sufficiently large $n$. Since this probability is less than $1$, we know in particular that $G$ contains an induced copy of $H$.
\end{proof}

\begin{corollary}
    For any constant $\alpha$, if $\ex(n , H) \leq O(n^{\alpha})$, then $\ex(n, \{K_{s,s}, H\ind\}) \leq O\left(n^{\left(2 - \frac{(2- \alpha)^4}{320}\right)}\right)$.
\end{corollary}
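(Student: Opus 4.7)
The plan is to simply combine Theorem \ref{thm:degeneracy-controls-ind} with the classical lower bound on extremal numbers in terms of degeneracy. Recall that the paper already cites from \cite{aks-degenerate} the fact that any bipartite $H$ of degeneracy $r$ satisfies $\ex(n, H) \geq \Omega(n^{2 - 2/r})$. Since the induced extremal number is only defined meaningfully when we also forbid $K_{s,s}$ in a bipartite pattern setting (as in Theorem \ref{thm:degeneracy-controls-ind}), we may assume $H$ is bipartite and apply this lower bound.

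So the first step is to use the hypothesis $\ex(n, H) \leq O(n^\alpha)$ to extract an upper bound on the degeneracy of $H$. Combining with $\ex(n,H) \geq \Omega(n^{2 - 2/r})$ and letting $n \to \infty$ forces $2 - 2/r \leq \alpha$, which rearranges to $r \leq 2/(2 - \alpha)$.

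Second, I would plug this estimate into Theorem \ref{thm:degeneracy-controls-ind} to get $\ex(n, \{K_{s,s}, H\ind\}) \leq O(n^{2 - 1/(20 r^4)})$. Since $r^4 \leq 16/(2 - \alpha)^4$, we have $20 r^4 \leq 320/(2 - \alpha)^4$, hence $1/(20 r^4) \geq (2 - \alpha)^4/320$, and substituting into the exponent yields the claimed bound $O(n^{2 - (2-\alpha)^4/320})$.

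There is no real obstacle here: the corollary is essentially an arithmetic rearrangement packaging Theorem \ref{thm:degeneracy-controls-ind} together with the already-cited lower bound of \cite{aks-degenerate}. The only point requiring any thought is that both ingredients require $H$ to be bipartite, which is consistent with the hypothesis since the induced extremal number appearing on the right is the one controlled by Theorem \ref{thm:degeneracy-controls-ind}. Any future improvement to the exponent in Theorem \ref{thm:degeneracy-controls-ind} (for instance, from $r^4$ down to $r$) would propagate through this argument mechanically to improve the exponent $(2-\alpha)^4/320$ in the corollary.
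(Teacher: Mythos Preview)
Your proposal is correct and follows essentially the same argument as the paper: bound the degeneracy via $\Omega(n^{2-2/r}) \leq \ex(n,H) \leq O(n^{\alpha})$ to get $r \leq 2/(2-\alpha)$, then substitute into Theorem~\ref{thm:degeneracy-controls-ind}. The paper's proof is just the two-line version of what you wrote.
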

\begin{proof}
    Let $r$ be the degeneracy of $H$. Since $\Omega(n^{2-2/r}) \leq \ex(n, H) \leq O(n^{\alpha})$, we must have $r \leq \frac{2}{2-\alpha}$. The result now follows from $\ex(n, \{K_{s,s}, H\ind\}) \leq O(n^{2 - 1/(20r^4)})$.
\end{proof}

\subsection{Towards better dependence on degeneracy: forbidding specific edges}\label{sec:strong-sauce}

The induced extremal number upper bound obtained in \cref{thm:degeneracy-controls-ind} is of the form $n^{2 - 1/\mathsf{poly}(r)}$, whereas for standard extremal numbers we know a bound of the form $n^{2-1/\Theta(r)}$. In the corresponding proof for standard extremal numbers, it suffices to apply the dependent random choice of \cref{lem:vanilla-drc} with $t = \Theta(1)$, guaranteeing that every $r$-tuple has a common neighborhood of size larger than some constant --- however, we took $t$ much larger in order to guarantee common neighborhoods of close to linear size. \\

The reason this was necessary was because our proof was counting ``out-of-order". We described a process of choosing an embedding that, when followed in degeneracy order, had exactly $n^{1 - 1/1.51r}$ choices at each step. However, in order to bound the number of slippery embeddings, we first had to fix the embeddings of the tuple that got electrocuted, and only then could count the number of choices for the vertex that electrocuted them. If the electrocuter appeared before the electrocutees in the degeneracy order, then this meant that we could not just embed in order, but instead had to fix the images of the electrocutees first, allowing them $n$ possibilities each as opposed to $n^{1-1/1.51r}$.  \\

One might wonder whether this technical issue can be overcome to show an upper bound of the form $n^{2-1/\Theta(r)}$. In this section, we make partial progress towards that goal, finding copies of the pattern subgraph which, while not necessarily induced, avoid particular subsets of the pattern graph's non-edges. Our first such result recovers bounds of the form $n^{2-1/\Theta(r)}$ when only a constant number of $H$'s non-edges must be preserved.

\begin{definition}
    For a graph $H$, and a subset $F \subseteq (V(H) \times V(H)) \setminus E(H)$ of ``forbidden'' edges, let $H\setminus (F\ind)$ denote the family of graphs $H'$ on $V(H)$ such that
    \begin{itemize}
        \item $(u,v) \in E(H) \implies (u,v) \in E(H')$, and
        \item $(u,v) \in F \implies (u,v) \not\in E(H')$.
    \end{itemize}
\end{definition}

\begin{proposition}
    For all $H$ of degeneracy $r$, and $F\subseteq (V(H) \times V(H)) \setminus E(H)$ with $|F| = f$, we have $\ex(n, \{K_{s,s}, H\setminus (F\ind)\}) \leq O(n^{2 - 1/(12f + 6r)})$.
\end{proposition}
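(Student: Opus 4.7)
The plan is to adapt the random embedding argument of \cref{thm:degeneracy-controls-ind} to exploit the small size of $F$. Since we only need to preserve the $f$ specific non-edges in $F$ (rather than all of $H$'s non-edges), the cost of the slipperiness analysis should be essentially additive in $f$, yielding a linear rather than quartic dependence on $r$.

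I would first apply \cref{lem:vanilla-drc} with a parameter $t$ chosen so that $t^3 r$ is on the order of $12f + 6r$, obtaining subsets $U_1, U_2 \subseteq V(G)$ on the two sides of the bipartition such that every $r$-tuple in one has common neighborhood of size at least $n^{1-1.8/t}$ in the other. Then I would define a random embedding $\pi_w : V(H) \to V(G)$ in degeneracy order exactly as in the proof of \cref{thm:degeneracy-controls-ind}, so that the image of $\pi_w$ is automatically a homomorphic copy of $H$ preserving all of $E(H)$.

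Next, for each forbidden non-edge $(v_i, v_j) \in F$, I would bound the probability that $\pi_w(v_i) \sim \pi_w(v_j)$ in $G$ via a modified electrocution argument. Specifically, I would redefine ``slippery'' relative to $F$: declare $\pi_w(v_i)$ to electrocute the $r$-tuple of $v_j$'s earlier-degeneracy-neighbors if it is adjacent to at least a $1/(2f)$-fraction of their common neighborhood. By \cref{lem:electrocute}, only $O_s(f^s)$ vertices can electrocute any given tuple, so the out-of-order counting (as in \cref{clm:not-slippery}) bounds the slipperiness probability per forbidden edge; and conditional on no electrocution, the probability that $\pi_w(v_j)$ lands adjacent to $\pi_w(v_i)$ is at most $1/(2f)$. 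Union-bounding over the $f$ forbidden non-edges yields an embedding satisfying all constraints with positive probability.

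The main obstacle is the out-of-order slipperiness counting, which in the proof of \cref{thm:degeneracy-controls-ind} forces $t = \Omega(r)$ (and hence $t^3 r = \Omega(r^4)$) in order that the exponent $-1 + (k+1) \cdot 1.8/t$ be negative for $k$ up to $r$. To attain the linear-in-$(r+f)$ bound claimed, one must avoid paying this quartic factor, which requires a more delicate argument: either by choosing a degeneracy ordering adapted to $F$ so that each forbidden-edge endpoint sits in a position minimizing the out-of-order cost, or by handling each forbidden edge through a separate lightweight DRC-style step whose contribution to the exponent is only an additive $O(1)$. The precise constants $12$ and $6$ should emerge from balancing this additional refinement against the base application of \cref{lem:vanilla-drc}, with the $6r$ term coming from the baseline degeneracy embedding and the $12f$ term coming from the per-forbidden-edge cost.
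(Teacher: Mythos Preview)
Your proposal identifies the right obstacle but does not actually resolve it. You correctly observe that the slipperiness counting in \cref{thm:degeneracy-controls-ind} forces $t=\Omega(r)$ and hence an exponent of $1/\Theta(r^4)$, and you say that overcoming this ``requires a more delicate argument'' --- but you then only gesture at two possible fixes (an $F$-adapted degeneracy ordering, or a per-edge DRC step) without carrying either out. Neither suggestion is straightforward: reordering cannot in general make all forbidden endpoints appear late enough to avoid out-of-order counting, and a separate DRC step per forbidden edge does not obviously compose with the global embedding. As written, the proposal is a plan with a known hole, not a proof.

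The paper's argument bypasses the electrocution machinery entirely, via a trick you did not consider. Let $V(F)$ be the set of endpoints of forbidden pairs, so $|V(F)|\le 2f$. Apply \cref{lem:vanilla-drc} with tuple size $r+2f$ and constant $t=1.81$ (this is where $1.81^3(r+2f)\le 6r+12f$ comes from), obtaining $U_1,U_2$ in which every $(r+2f)$-tuple has common neighborhood of size $n^{\Omega(1)}$. Now embed $H[V(F)]$ first, as an \emph{induced} copy: since this subgraph has at most $2f$ vertices it has maximum degree at most $2f$, so the bounded-degree induced Tur\'an result of \cite{main-paper} finds it. Finally, extend greedily in degeneracy order: each remaining vertex has at most $r$ earlier neighbors in the ordering plus at most $2f$ neighbors in $V(F)$, hence at most $r+2f$ already-placed neighbors, and the DRC guarantee supplies enough candidates. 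Crucially, no vertex outside $V(F)$ participates in any forbidden pair, so once $V(F)$ is embedded induced, the extension need not avoid any non-edges at all --- the slipperiness analysis is simply unnecessary here.
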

\begin{proof}
    Let $V(F) \subseteq V(H)$ be the set of vertices with an endpoint in $F$, noting that $|V(F)| \leq 2f$. Consider an $n$-vertex $K_{s,s}$-free graph $G$ on $n^{2 - 1/(12f + 6r)}$ vertices. Applying \cref{lem:vanilla-drc} with $t = 1.81$, we find vertex subsets $U_1, U_2 \subseteq V(G)$ such that any $(r+2f)$-tuple of vertices in one subset has at least $n^{.001}$ common neighbors in the other. The subgraph of $H$ induced by $V(F)$ has at most $2f$ vertices, and thus maximum degree at most $2f$ --- so, by the results of~\cite{main-paper} on induced extremal numbers of graphs with bounded maximum degree, we can find an induced copy of that subgraph. Fix that subgraph as the embedding of $V(F)$, order the remaining vertices of $H$ in degeneracy order, and them embed one-at-a-time. Each vertex to be embedded is neighbors with at most $r + 2f$ already-embedded vertices, so there are at least $n^{.001}$ candidates. As long as $n$ is large enough that $n^{.001} > |V(H)|$, this ensures that there is always an option that has not already been used, and so a copy of $H\setminus(F\ind)$ can be found.
\end{proof}

We also observe that it is possible to forbid all edges between vertices that are close to each other in degeneracy order.

\begin{theorem}\label{thm:degenerate-no-close-edges}
    Let $H$ be an $r$-degenerate bipartite graph, and $v_1, \dots, v_{|V(H)|}$ be an ordering of the vertices of $H$ such that, for all indices $i$, there are at most $r$ indices $j <i$ with $(v_i, v_j) \in E(H)$. Then, for any $q \in \N$, we have $\ex(n, \{K_{s,s}, H\setminus (F\ind)\}) \leq O(n^{2-1/(2000q^2r)})$, where $F = \{(v_i, v_j)\colon (v_i, v_j) \not\in E(H) \text{ and } |i - j| \leq  q\}$.
\end{theorem}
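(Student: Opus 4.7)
The plan is to mirror the structure of the proof of \cref{thm:degeneracy-controls-ind} --- apply dependent random choice to extract structured vertex subsets $U_1, U_2 \subseteq V(G)$, randomly embed $V(H)$ according to the degeneracy ordering, and use an electrocution argument to show that some embedding avoids the non-edges in $F$ --- but to incorporate two refinements that together yield the improved $q^2 r$ exponent.

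The first refinement is quantitative. Rather than invoking \cref{lem:vanilla-drc} as a black box, I would re-run the underlying DRC argument with parameters tailored to this setting: taking target common-neighborhood size $n^{1-\alpha}$ with $\alpha = \Theta(1/q)$ and DRC tuple size $t = \Theta(qr)$, the standard second-moment and deletion argument shows that $n^{2 - 1/O(q^2 r)}$ edges suffice to produce $U_1, U_2$ such that every $r$-tuple in one has at least $n^{1-\alpha}$ common neighbors in the other. The key point is that we no longer tie $\alpha$ to $1/t$ as in \cref{lem:vanilla-drc}; instead, $\alpha$ depends only on $q$ while $t$ grows with $qr$, saving a factor of $q$ in the edge exponent.

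The second refinement is structural. Define $\pi_w$ exactly as in the proof of \cref{thm:degeneracy-controls-ind}, and call $\pi_w$ \emph{$F$-slippery} if some $(v_i, v_j) \in F$ with $i < j$ has $\pi_w(v_i)$ electrocuting $\pi_w(P_j)$, where $P_j$ is the set of $v_j$'s degeneracy-predecessor neighbors in $H$. The elements of $P_j$ with index greater than $i$ --- the ``late'' predecessors responsible for the $n^r$ blowup in the proof of \cref{thm:degeneracy-controls-ind} --- all lie in the window $(i, j)$ of length at most $q - 1$, so there are at most $q - 1$ of them. Parameterizing $F$-slippery $w$'s by fixing the images of these late predecessors thus incurs only an $n^{q-1}$ overcount, giving a per-scenario slippery fraction of $\mathsf{poly}(|V(H)|, s, q) \cdot n^{-1 + q\alpha}$, which is $n^{-\Omega(1)}$ for $\alpha = \Theta(1/q)$ with a suitably small hidden constant. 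A union bound over the $\leq q|V(H)|$ forbidden pairs in $F$ keeps the total $F$-slippery probability small; conditioned on $\pi_w$ being non-slippery, the same union bound as in \cref{thm:degeneracy-controls-ind} shows that each forbidden non-edge appears as an edge of $\pi_w$ with small probability, and an injectivity check completes the argument. The main technical hurdle is the refined DRC: while not fundamentally new, it requires care to confirm that the aggressive edge count $n^{2 - 1/O(q^2 r)}$ still suffices for the deletion step in the standard DRC analysis.
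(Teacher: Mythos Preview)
Your structural insight is right and is essentially the same observation the paper exploits: since every forbidden pair $(v_i,v_j)\in F$ has $|i-j|\le q$, the predecessors of $v_j$ that come \emph{after} $v_i$ in the degeneracy order all lie in a window of length at most $q-1$, so only those (at most $q-1$) vertices need to be ``fixed out of order'' when counting slippery embeddings. With common-neighborhood size $m=n^{1-\alpha}$, your slippery-fraction bound of roughly $n^{q\alpha-1}$ is correct, forcing $\alpha<1/q$.

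The gap is in your first refinement, the DRC claim. You assert that $n^{2-1/O(q^2r)}$ edges suffice to produce two-sided sets $U_1,U_2$ in which every $r$-tuple has $n^{1-\Theta(1/q)}$ common neighbors in the other. This is not what the standard two-round DRC gives. A sample of size $t=\Theta(qr)$ in round one does make every $r$-tuple (even every $\Theta(r)$-tuple) in $A$ have $\ge n^{1-1/q}$ common neighbors in $L$, but it also forces $|A|\le n^{1-t\varepsilon}$; with $\varepsilon=\Theta(1/(q^2r))$ this is only $n^{1-\Theta(1/q)}$, which is \emph{smaller} than the target common-neighborhood size. The second round then cannot possibly certify that $r$-tuples in $B$ have $\ge n^{1-1/q}$ common neighbors inside $U_1$, since $|U_1|$ itself is too small. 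Working through the two-sided analysis carefully, common neighborhoods of size $n^{1-\alpha}$ on both sides require $\varepsilon=O(\alpha^3/r)$, i.e.\ $n^{2-1/O(q^3r)}$ edges --- this is exactly the cubic dependence in \cref{lem:vanilla-drc}. So your argument as written proves the weaker bound $\ex(n,\{K_{s,s},H\setminus(F\ind)\})\le O(n^{2-1/O(q^3r)})$, not the stated $q^2r$ bound.

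The paper gets $q^2r$ by a genuinely different DRC variant (\cref{lem:eel-drc}): it asks only for small common neighborhoods ($n^{1/10}$), but additionally demands that \emph{every} vertex $v\in U_i$ meet at least an $n^{-1/t}$ fraction of every such common neighborhood. This ``fraction lower bound'' is what controls the out-of-order overcount --- each step in the window costs a factor of at most $n^{1/t}=n^{1/\Theta(q)}$ rather than $n^{\alpha}$ --- and it can be obtained with only $n^{2-1/O(t^2r)}$ edges because the bad-tuple count for this property scales like $n^{p+1}(n^{-1/t})^{q_L}$ rather than $n^p(n^{1-\alpha}/n)^{q_L}$. Exploiting this property also requires the paper to randomize the vertex orderings at each embedding step and invoke a Chernoff bound, machinery that your sketch does not include.
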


To prove \cref{thm:degenerate-no-close-edges}, we will need to re-do the analysis of dependent random choice. We are interested in obtaining something complementary to \cref{lem:electrocute}: we want our sets to be such that, for any $r$-tuple of vertices in one set, although few vertices are neighbors with a \emph{very large} fraction of the tuple's common neighborhood, \emph{every} vertex is neighbors with at least a \emph{somewhat large} fraction of the tuple's common neighborhood. 

\begin{lemma}\label{lem:eel-drc}
    For any $r, t \geq 2$, any sufficiently large $n$, and any $n$-vertex graph $G$ with at least $n^{2 - 1/(9t^2r)}$ edges, there exist nonempty subsets $U_1, U_2 \subseteq V(G)$ such that both of the following conditions hold.
    \begin{itemize}
        \item Any $r$-tuple of vertices $(u_1, \dots, u_r) \in U_i^r$ has a large common neighborhood --- that is, $|\bigcap_{j} N(u_j) \cap U_{3-i}| \geq n^{1/10}$. 
        \item For any $r$-tuple of vertices $(u_1, \dots, u_r) \in U_i^r$, and any other vertex $v \in U_i$, a sizeable fraction of the common neighborhood of $(u_1, \dots, u_r)$ is neighbors with $v$ --- that is, $\frac{|\bigcap_i N(u_i) \cap U_{3-i} \cap N(v)|}{|\bigcap_i N(u_i) \cap U_{3-i}|} \geq n^{-1/t}$.
    \end{itemize}
\end{lemma}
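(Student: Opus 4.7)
The plan is to mirror the proof of the standard dependent random choice lemma (\cref{lem:vanilla-drc}), but to track one additional obstruction that enforces the new density property. The basic random experiment is unchanged: I would sample a multiset $T \subseteq V(G)$ of size $T_0$ (to be chosen of order $tr$) uniformly with replacement, and set $W = \bigcap_{u \in T} N(u)$. By Jensen's inequality applied to $x \mapsto x^{T_0}$ together with the hypothesis $|E(G)| \geq n^{2-1/(9t^2r)}$, one gets $\mathbb{E}[|W|] \geq n^{1-T_0/(9t^2r)}$, which remains polynomial in $n$ as long as $T_0 = o(t^2 r)$.

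Two kinds of bad configurations would be tracked. A type-(a) bad configuration is an $r$-tuple $S \in V^r$ with $|N(S)| < n^{1/10}$; by the standard DRC calculation, $\Pr[S \subseteq W] = (|N(S)|/n)^{T_0} \leq n^{-9T_0/10}$, so the expected number of such tuples in $W$ is at most $n^{r - 9T_0/10}$. A type-(b) bad configuration is a pair $(S, v)$ with $S$ an $r$-tuple and $v \in V \setminus S$ satisfying $|N(S) \cap N(v)| < n^{-1/t} |N(S)|$; for such pairs $|N(S \cup \{v\})| \leq n^{1-1/t}$, so $\Pr[S \cup \{v\} \subseteq W] \leq n^{-T_0/t}$, and summing over at most $n^{r+1}$ such pairs gives an expected count at most $n^{r + 1 - T_0/t}$. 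Taking $T_0 = 2tr$ makes both counts much smaller than $\mathbb{E}[|W|]$. By averaging, there exists a particular $T$ for which $|W|$ is near its mean and all bad counts are at most a small fraction of $|W|$; deleting one vertex per surviving bad configuration then produces a subset $W' \subseteq W$ that is polynomially large and satisfies the one-sided form of both properties: every $r$-tuple in $W'$ has common neighborhood in $V(G)$ of size at least $n^{1/10}$, and every $(S, v) \in (W')^{r+1}$ satisfies $|N(S) \cap N(v)| \geq n^{-1/t}|N(S)|$.

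To convert this into the two-sided statement required by the lemma, I would run the same procedure once more, nested inside $W'$: sample a fresh multiset $T' \subseteq W'$ of size $T_0$, set $U_1 = \bigcap_{u \in T'} N(u)$, and after an analogous trimming pass take $U_2 = W'$. Because $T'$ is now drawn from $W'$ rather than $V(G)$, the bad-tuple probabilities pick up a factor of $(|W'|/n)^{-T_0}$; since $|W'|$ is polynomially large this is absorbed cleanly, and we still obtain that every $r$-tuple in $U_1$ has common neighborhood of size at least $n^{1/10}$ in $U_2$, along with the density condition between $U_1$ and $U_2$.

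The main technical obstacle I expect is showing that the density condition (ii) survives the two-round construction symmetrically in $i \in \{1, 2\}$. It is clean for $r$-tuples in $U_1$ with the added vertex also in $U_1$, but the statement also requires the density condition for $r$-tuples in $U_2$ with the added vertex in $U_2$. Setting up the second round's type-(b) bad configurations to enforce this symmetry — and verifying that the exponents still balance once both rounds are compounded — appears to be the delicate bookkeeping step, and it is ultimately what pins down the hypothesis exponent $9t^2r$ rather than a more naive $\Theta(tr)$.
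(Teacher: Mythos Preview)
Your overall architecture (two nested rounds of dependent random choice, plus a new ``type-(b)'' bad-configuration count to enforce the density ratio) is exactly what the paper does. But there is a genuine gap in how the two rounds interact, and your diagnosis of where the obstacle lies is off.

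With your setup, after round one every $r$-tuple in $W'$ has $\geq n^{1/10}$ common neighbours \emph{in $V(G)$}, and every $(r{+}1)$-tuple satisfies the density ratio \emph{relative to $V(G)$}. You then set $U_2 = W'$ and $U_1 = \bigcap_{u\in T'} N(u)$ for $T'\subseteq W'$ of size $T_0$. The $i=2$ conditions of the lemma demand that an $r$-tuple $(u_1,\dots,u_r)\in U_2^r$ have many common neighbours \emph{in $U_1$}; but
\[
\bigcap_j N(u_j)\cap U_1 \;=\; \bigcap_j N(u_j)\cap \bigcap_{t\in T'}N(t),
\]
which is the common neighbourhood of an $(r+T_0)$-tuple in $W'$. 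Your round-one guarantees say nothing about $(r+T_0)$-tuples, only $r$- and $(r{+}1)$-tuples, so neither condition~(i) nor condition~(ii) for $i=2$ follows. The fix is not in the second round's bad-configuration bookkeeping, as you suggest; it is in the \emph{first} round, where you must track $p$-tuples and $(p{+}1)$-tuples for $p = r + T_0'$ (with $T_0'$ the second-round sample size). This is precisely what the paper does: it sets $p = r + q_R$ with $q_R = t(r+2)$, and tracks $p$- and $(p{+}1)$-tuples in round one, so that after intersecting with $\bigcap_j N(r_j)$ the conditions for $U_1$ are automatic.

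This also explains the exponent. To kill the type-(b) count $n^{p+1}\cdot(n^{-1/t})^{q_L}$ you need $q_L > t(p+1)\approx t^2r$, which forces the first-round sample to have size $\Theta(t^2r)$ and hence the edge hypothesis $n^{2-1/\Theta(t^2r)}$. Your uniform choice $T_0 = 2tr$ for both rounds cannot work: the first round must be of order $t^2r$ while the second round is of order $tr$.
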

\begin{proof}
    The proof is essentially the same as that of \cref{lem:vanilla-drc}, but we include it in full for completeness. First, partition $V(G)$ into two parts $L$ and $R$ such that at least half of the edges cross the partition. Then, randomly choose $q_L = 3t^2r$ vertices $\ell_1, \dots, \ell_{q_L} \in L$ with replacement, and consider their common neighborhood $A = \bigcap_i N(\ell_i) \bigcap R$. For $p = r + tr + 2t$, we will define random variables $X,Y,Z$, where we let $X = |A|$, let $Y$ be the number of tuples $(u_1, \dots, u_p) \in A^p$ such that $|\bigcap_i N(u_i) \cap L| < n^{1/10}$, and let $Z$ be the number of tuples $(u_1, \dots, u_p, v) \in A^{p+1}$ such that $\frac{|\bigcap_i N(u_i) \cap L \cap N(v)|}{|\bigcap_i N(u_i) \cap L|} < n^{-1/t}$. To bound $\mathbb{E}[X]$, we note that $\sum_{v\in R}|N(v) \cap L| \ge \frac{n^{2-1/(9t^2r)}}{2}$, and use convexity:
    \[\mathbb{E}[X] = \sum_{v \in R} \left(\frac{|N(v) \cap L|}{|L|} \right)^{q_L} \geq \frac{1}{n^{q_L}} \sum |N(v) \cap L|^{q_L} \geq \frac{1}{n^{q_L}} \cdot n \cdot \left(\frac{n^{2-1/(9t^2r)}}{2n}\right)^{q_L} \geq \frac{n^{2/3}}{2^{q_L}}.\]
    To calculate $\mathbb{E}[Y]$, we note that any fixed such tuple will only be chosen if all $q_L$ chosen vertices lie in the common neighborhood, so
    \[\mathbb{E}[Y] = \sum_{\substack{(u_1, \dots, u_r)\in R^r \\ |\bigcap_i N(u_i) \cap L| \leq n^{1/10}}} \Pr[u_1, \dots, u_p \in A ] \leq n^p \cdot \left(\frac{n^{1/10}}{n}\right)^{q_L} < 1.\]
    
    To bound $\mathbb{E}[Z]$, we note that $\Pr_{\ell_1, \dots, \ell_{q_L}}[u_1, \dots, u_p \in A \text{ and } v \in A] \leq \Pr_{\ell_1, \dots, \ell_{q_L}}[v \in A\ | \ u_1,\dots,u_p \in A] = \left(\frac{|\bigcap_i N(u_i) \cap L \cap N(v)|}{|\bigcap_i N(u_i) \cap L|}\right)^{q_L}$. So, bounding the number of tuples $(u_1, \dots, u_p, v) \in R^{p+1}$ such that $\frac{|\bigcap_i N(u_i) \cap L \cap N(v)|}{|\bigcap_i N(u_i) \cap L|} < n^{-1/t}$ by $n^{p+1}$, we obtain

    \[\mathbb{E}[Z] \leq n^{p + 1} \cdot \left(n^{-1/t}\right)^{q_L} < 1.\]
    
    Since $\mathbb{E}[X - Y - Z] \ge \frac{n^{2/3}}{2^{q_L}} - 2 > n^{2/3 - .0001}$, there exists some choice of $\ell_1, \dots, \ell_{q_L}$ such that $X - Y - Z> n^{2/3 - .0001}$. Fix $A$ according to this choice of $\ell$, and let $U_1$ be obtained from $A$ by removing one vertex from each tuple $(u_1, \dots, u_r) \in A^r$ such that $|\bigcap_i N(u_i) \cap L| < n^{1/2}$, and one vertex from each tuple $(u_1, \dots, u_r, v) \in A^{r+1}$ such that $\frac{|\bigcap_i N(u_i) \cap L \cap N(v)|}{|\bigcap_i N(u_i) \cap L|} < n^{-1/(2r)}$. We are guaranteed that $|U_1| \geq n^{2/3-.0001}$.\\

    Now, choose $q_R = t(r+2)$ vertices $r_1, \dots, r_{q_R} \in U_1$, and consider their common neighborhood $B = \bigcap_i N(r_i) \cap L$. Let $Y'$ be the number of tuples $(u_1, \dots, u_r) \in B^r$ such that $|\bigcap_i N(u_i) \cap U_1| < n^{1/10}$, and let $Z'$ be the number of tuples $(u_1, \dots, u_r, v) \in B^{r+1}$ such that $\frac{|\bigcap_i N(u_i) \cap U_1 \cap N(v)|}{|\bigcap_i N(u_i) \cap U_1|} < n^{-1/t}$. By the same logic as above, we have
    
    \[\mathbb{E}[Y'] \leq n^r \cdot \left( \frac{n^{1/10}}{n^{2/3-.0001}} \right)^{q_R} < 1/2,\]
    \[\mathbb{E}[Z'] \leq  n^{r + 1} \cdot \left(n^{-1/t}\right)^{q_R} < 1/2.\]
    
    So, by a union bound, there exists a choice of the $r_1, \dots, r_{q_R}$ such that $Y' = Z' = 0$. Set $U_2$ to be the corresponding set of $B$.\\

    We claim $U_1, U_2$ satisfy the conditions of the statement. The fact that $Y' =Z'=0$ immediately implies the conditions for vertices in $U_2$. Then, since $r + q_R = p$, for any $(u_1, \dots, u_r) \in U_1$ we have

    \[\left|\bigcap_i u_i \cap U_2\right| = \left|\bigcap_i u_i \cap \left(\bigcap_i N(r_i) \cap L\right)\right| \geq n^{1/10}, \]
    
    and for any $(u_1, \dots, u_r, v) \in U_1^{r+1}$, we have

    \[\frac{|\bigcap_i N(u_i) \cap U_{2} \cap N(v)|}{|\bigcap_i N(u_i) \cap U_{2}|} = \frac{|\bigcap_i N(u_i) \cap \left(\bigcap_i N(r_i)\cap L\right) \cap N(v)|}{|\bigcap_i N(u_i) \cap \left(\bigcap_i N(r_i) \cap L\right)|} \geq n^{-1/t}.\]
\end{proof}

We can now run a proof strategy similar to that of \cref{thm:degeneracy-controls-ind}, since \cref{lem:eel-drc} will let us show that the number of available embedding options does not grow too much when we embed \emph{only a little bit} out-of-order.

\begin{proof}[Proof of \cref{thm:degenerate-no-close-edges}]

Let $G$ be an $n$-vertex $K_{s,s}$-free graph with at least $n^{2 - 1/(2000q^2r)}$ edges. We can apply \cref{lem:eel-drc} with $t = 11q$ to find two parts $U_1, U_2 \subseteq G$ such that any $r$-tuple in one part has at least $n^{1/10}$ common neighbors in the other part, and any additional vertex is neighbors with at least an $n^{-1/11t}$ fraction of that common neighborhood.\\

As in the proof of \cref{thm:degeneracy-controls-ind}, we now define a distribution on homomorphisms $H \to G$, and show that a random homomorphism from this distribution is likely to correspond to a copy of $H$ as a subgraph without any of the forbidden edges. Once again, we will do so by embedding vertices in degeneracy order, choosing an embedding at each step uniformly from a prefix of the list of available candidates. However, in this case instead of defining all of these prefixes in terms of fixed, arbitrary orderings of the left vertex part $U_1$ and the right vertex part $U_2$, it will be useful to choose new, random orderings for each step of the embedding. For any tuple of numbers $w = (w_1, \dots, w_{|V(H)|}) \in [n^{1/10}]^{|V(H)|}$, and any tuple of permutations $\sigma = \big((\sigma_1^{(1)}, \sigma_1^{(2)}), \dots, (\sigma_{|V(H)|}^{(1)}, \sigma_{|V(H)|}^{(2)})\big) \in \big( S_{U_1} \times S_{U_2}\big)^{|V(H)|}$, we will define $\pi_w^{(\sigma)}: V(H) \to V(G)$.
If $v_i$ belongs to the left part of $H$, then $\pi_w^{(\sigma)}(v_i)$ is the $w_i$th vertex of $\bigcap_{j \leq i\colon (v_i, v_j) \in E(H)} N(v_j) \cap U_1$ to appear in the ordering $\sigma_i^{(1)}$. Likewise, if $v_i$ belongs to the right part of $H$, then $\pi_w^{(\sigma)}(v_i)$ is the $w_i$th vertex of $\bigcap_{j \leq i\colon (v_i, v_j) \in E(H)} N(v_j) \cap U_1$ to appear in the ordering $\sigma_i^{(2)}$.\\

We can make a slightly simpler definition of electrocution here, saying that $v \in U_i$ \defn{electrocutes} $u_1, \dots, u_r \in U_i$ if $|\bigcap_i N(u_i) \cap U_j \cap N(v)| \geq \frac{1}{100|V(H)|^2} |\bigcap_i N(u_i) \cap U_j|$. We will also define a version of slipperiness that requires indices to be close to each other: say that a tuple of vertices $Y = (y_1, \dots, y_{|Y|}) \in G^{|Y|}$ is \defn{slippery} if there exist $
(j_1, \dots, j_r) \in [|Y|]^r$ and $j^* \in Y \setminus \{j_1, \dots, j_r\}$ such that $y_{j^*}$ electrocutes $(y_{j_1}, \dots, y_{j_r})$, and also $\max_i (j_i) - j^* \leq q$.

\begin{claim}\label{clm:no-local-slippery}
    If $n$ is sufficiently large, $w$ is chosen uniformly from $[n^{1/10}]^{|V(H)|}$, and $\sigma$ is chosen uniformly from $\big( S_{U_1} \times S_{U_2}\big)^{|V(H)|}$, then the image $\text{Im}_{\pi_w^{(\sigma)}}(V(H))$ is slippery with probability at most $\frac{1}{100|V(H)|^2}$.
\end{claim}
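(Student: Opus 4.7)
My plan is to mimic the structure of the counting argument in the proof of the non-local analogue \cref{clm:not-slippery} (from \cref{thm:degeneracy-controls-ind}), but replace the use of \cref{lem:vanilla-drc} with both guarantees of \cref{lem:eel-drc}. The randomization of $\sigma$ is helpful because it makes $\pi_w^{(\sigma)}(v_\ell)$ conditionally uniform over the common neighborhood of $v_\ell$'s previously-embedded $H$-neighbors, so $\Pr[\pi_w^{(\sigma)}(v_\ell) = v^*] \leq 1/n^{1/10}$ for any target $v^*$. The overall argument is a union bound over the $\leq |V(H)|^{r+1}$ index tuples $(j^*, j_1, \dots, j_r)$ satisfying $\max_\alpha j_\alpha - j^* \leq q$; for each, I would bound the probability that $\pi_w^{(\sigma)}(v_{j^*})$ electrocutes $\pi_w^{(\sigma)}(v_{j_1}), \dots, \pi_w^{(\sigma)}(v_{j_r})$ by $o_n(1)$, so that the union bound yields $\frac{1}{100|V(H)|^2}$ for sufficiently large $n$.

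For a fixed index tuple, I would split into two subcases. The easy case is $j^* > \max_\alpha j_\alpha$: conditioning on $(w_\ell, \sigma_\ell)$ for $\ell < j^*$ fixes all the electrocutee targets $u_\alpha = \pi_w^{(\sigma)}(v_{j_\alpha})$, and then $\pi_w^{(\sigma)}(v_{j^*})$ is uniform over a common neighborhood of size $\geq n^{1/10}$ by property (i) of \cref{lem:eel-drc}. By \cref{lem:electrocute} at most $(200s|V(H)|^2)^s$ vertices electrocute any fixed $r$-tuple, so the conditional probability is at most $O(1/n^{1/10})$. The harder case is when some electrocutees are embedded after the electrocuter. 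Here one would enumerate slippery $(w,\sigma)$'s in the style of the original counting: fix the electrocutee targets in $V(G)$, then the $(w_\ell,\sigma_\ell)$ for the remaining positions, then $(w_{j^*},\sigma_{j^*})$ constrained to produce an electrocuter, and finally derive the $(w_{j_\alpha},\sigma_{j_\alpha})$ realizing the chosen targets. A naive version of this count pays $n^k$ for the electrocutee targets against only $(n^{1/10})^{k+1}$ per-step ``savings,'' which is too weak --- this is where property (ii) of \cref{lem:eel-drc} enters.

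The main obstacle is the quantitative use of property (ii): one must show that the number of ``after'' electrocutee target tuples that, together with the fixed electrocuter $v$ and ``before'' electrocutees, satisfy the electrocution condition is much smaller than $n^a$ (where $a \leq q$ is the number of out-of-order electrocutees). The intuition is that property (ii) makes every vertex a ``weak electrocuter'' with ratio $\geq n^{-1/t}$ of any $r$-tuple's common neighborhood; the genuine electrocution threshold $\frac{1}{100|V(H)|^2}$ is thus roughly a factor of $n^{1/t}$ above the baseline, and combining this gap with a Kővári--Sós--Turán-style count of ``heavy'' electrocutee targets should restrict the admissible ``after'' tuples enough that, after accounting for the $(1/n^{1/10})^a$ cost per out-of-order position, the per-tuple probability becomes $n^{-\Omega(1/q)}$. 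Since $t = 11q$ and $a \leq q$, the total saving exceeds the overcounting factor, and combining with the easy case and the union bound over indices yields the desired $\frac{1}{100|V(H)|^2}$ bound. A further subtle point is that the ``after'' electrocutee embeddings can depend on $\pi_w^{(\sigma)}(v_{j^*})$ through chains of $H$-adjacencies of length two or more (via opposite-part $H$-neighbors of $v_{j^*}$); the enumeration must absorb this dependency, which is naturally handled by first fixing the electrocutee \emph{targets} before counting the $(w_\ell, \sigma_\ell)$ that realize them, as in the original \cref{clm:not-slippery}.
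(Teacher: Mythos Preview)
Your overall skeleton (union bound over index tuples $(j^*,j_1,\dots,j_r)$; easy case when $j^*$ comes after all $j_\alpha$) matches the paper, but the hard case has a genuine gap. You propose to use property (ii) of \cref{lem:eel-drc} to argue that, for a fixed electrocuter $v$ and fixed ``before'' electrocutees, the number of admissible ``after'' electrocutee target tuples is $\ll n^a$. But property (ii) is a \emph{lower} bound on the ratio $|\bigcap_i N(u_i)\cap N(v)|/|\bigcap_i N(u_i)|$, holding for \emph{every} tuple and \emph{every} $v$; it says nothing about how many tuples push this ratio above the electrocution threshold for a given $v$. A single vertex could in principle electrocute essentially all $r$-tuples without contradicting (i), (ii), or $K_{s,s}$-freeness, and \cref{lem:electrocute} only bounds the number of electrocuters of a fixed tuple, not the other way around. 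So no K\H{o}v\'ari--S\'os--Tur\'an-style count in your direction is available, and your enumeration cannot beat the naive $n^a$ loss for the out-of-order targets.

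The paper uses property (ii) quite differently, and this is precisely why fresh random orderings $\sigma_i$ are introduced at each step rather than one fixed ordering. After fixing $(w_\ell,\sigma_\ell)$ for $\ell<j^*$, let $A_i$ (for $j^*<i\le j_r$) be the candidate set for $v_i$ \emph{ignoring} any adjacency constraint to $\pi_w^{(\sigma)}(v_{j^*})$. Property (ii) gives $|A_i\cap N(\pi_w^{(\sigma)}(v_{j^*}))|\ge n^{-1/11q}|A_i|$, so by a Chernoff bound over the uniformly random $\sigma_i$ the first $n^{1/10}$ elements of $A_i\cap N(\pi_w^{(\sigma)}(v_{j^*}))$ all lie among the first $100\,n^{1/10+1/11q}$ elements of $A_i$ with overwhelming probability. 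On that event (``Case 1''), each actual embedding $\pi_w^{(\sigma)}(v_i)$ falls inside a window of size $100\,n^{1/10+1/11q}$ that is \emph{independent of the electrocuter}; one then fixes $\sigma$, chooses each of the at most $q$ out-of-order embeddings from its window (total overcount $\le n^{q/11q}=n^{1/11}$), and finally chooses the electrocuter from the $O(1)$ options of \cref{lem:electrocute}, saving $n^{1/10}$. The complementary ``Case 2'' is handled directly by the Chernoff bound. Property (ii) is thus used to control the \emph{position} of the true candidate set inside the random ordering, not to count electrocutable tuples---this is the idea your sketch is missing.
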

\begin{proof}
    By a union bound, it suffices to show for every particular $j_1 \leq \dots \leq j_r$ and every $j^*$ with $j_r - j^* \leq q$ that the probability of $\pi_{w}^{(\sigma)}(v_{j^*})$ electrocuting $\left(\pi_{w}^{(\sigma)}(v_{j_1}), \dots, \pi_{w}^{(\sigma)}(v_{j_r}) \right)$ is at most $\frac{1}{100|V(H)|^{r+3}}$. Note that if $j_r < j^*$ this is easy:
    conditioning on any values of $\sigma$ and $w_1, \dots, w_{j^*-1}, w_{j^*+1}, \dots, w_{|V(H)|}$, by \cref{lem:electrocute} at most $(200s|V(H)|^2)^{2s}$ of the $n^{1/10}$ possible choices for $w_{j^*}$ result in $\pi_w^{(\sigma)}(v_{j^*})$ electrocuting $\left(\pi_{w}^{(\sigma)}(v_{j_1}), \dots, \pi_{w}^{(\sigma)}(v_{j_r}) \right)$. We would like to say something similar when $j_r$ is larger (but not much larger) than $j^*$.\\
    
    Fix some such $j_1, \dots, j_r, j^*$, and also fix any values for $w_1, \dots, w_{j^*-1}$ and $\sigma_1, \dots, \sigma_{j^*-1}$ --- we claim that the electrocution probability is low conditioned on any such choices. Note that the probability of electrocution now depends only on the values of $w_{j^*}, \dots, w_{j_r}$ and $\sigma_{j^*}, \dots, \sigma_{j_r}$, since indices later than $j_r$ have no effect on the embeddings of earlier vertices. In order to count the number of embeddings that are slippery at these indices, we will divide into two types, whose counts we will bound separately. For any $i$ such that $j^* < i \leq j_r$, letting $b \in \{1,2\}$ be the part of $H$ to which $v_{i}$ belongs, we let $A_{i} = \bigcap_{\ell < i \colon (v_\ell, v_{i}) \in E(H)) \text{ and } \ell \neq j^*} N(\pi_w^{(\sigma)}(v_\ell)) \cap U_b$ be the set of candidates for $\pi_w^{(\sigma)}(v_{i})$ at the time of embedding when one ignores the potential requirement to be neighbors with $\pi_w^{(\sigma)}(v_{j^*})$. 
    
    \paragraph{Case 1:} For all $i$ such that $j^* < i \leq j_r$, at least $n^{1/10}$ vertices of $N(\pi_w^{(\sigma)}(v_{j^*}))$ appear among the first $100 n^{1/10 + 1/11q}$ vertices of $A_{i}$ in the ordering $\sigma_{i}^{(b)}$.\\

    In order to bound the probability of electrocution in this case, we can condition on any arbitrary value of $\sigma$. Now, to bound the number possible choices of $w_{j^*}, \dots, w_{j_r}$, we can do the following: 
    \begin{enumerate}
        \item For each $i$ such that $j^* < i \leq j_r$ in order, choose a value for $\pi_w^{(\sigma)}(v_i)$ among the first $100 n^{1/10 + 1/11q}$ vertices of $A_i$ in ordering $\sigma_{i}^{(b)}$.
        \item Choose a value for $\pi_w^{(\sigma)}(v_{j^*})$ among the vertices that electrocute $\pi_w^{(\sigma)}(v_{j_1}), \dots, \pi_w^{(\sigma)}(v_{j_r})$.
    \end{enumerate}

    Note that, since $\sigma$ is fixed, fixing the embeddings $\pi_w^{(\sigma)}(v_{j^*}), \dots, \pi_w^{(\sigma)}(v_{j_r})$ will uniquely determine the values $w_{j^*}, \dots, w_{j_r}$. In any valid embedding, for all $i$ with $j^* < i \leq j_r$, we will either choose $v_i$ from among the first $n^{1/10}$ vertices of $A_i$ (if $(v_{j^*}, v_i) \not\in E(H)$), or among the first $n^{1/10}$ vertices of $A_i \cap N(\pi_w^{(\sigma)}(v_{j^*})$ (if $(v_{j^*}, v_i) \in E(H)$). So, the number of ways to perform the above process is indeed an upper bound on the number of choices of $w_{j^*}, \dots, w_{j_r}$ that lead to electrocution in this case. Since, by \cref{lem:electrocute}, there are at most $(200s|V(H)|^2)^s$ vertices that electrocute any fixed tuple $\pi_w^{(\sigma)}(v_{j_1}), \dots, \pi_w^{(\sigma)}(v_{j_r})$, the number of ways to perform the above process is at most 
    \begin{align*}
        \left( 100 n^{1/10 + 1/11q}\right)^{j_r - j^*} \cdot (200s|V(H)|^2)^s &\leq n^{(j_r-j^*)/10 + (j_r - j^*)/11q} \cdot (200s|V(H)|^2)^{2s + q} \\
        &\leq \left(n^{1/10}\right)^{j_r + 1 - j^*} \cdot \frac{1}{200 |V(H)|^{r+3}}
    \end{align*}
    for sufficiently large $n$. So the probability of both belonging to case 1 and having $\pi_w^{(\sigma)}(v_{j^*})$ electrocute $\pi_w^{(\sigma)}(v_{j_1}), \dots, \pi_w^{(\sigma)}(v_{j_r})$ is at most $\frac{1}{200|V(H)|^{r+3}}$.

    \paragraph{Case 2:} For some $i^*$ with $j^* < i^* \leq j_r$, fewer than $n^{1/10}$ vertices of $N(\pi_w^{(\sigma)}(v_{j^*}))$ appear among the first $100 n^{1/10 + 1/11q}$ vertices of $A_{i^*}$ in the ordering $\sigma_{i}^{(b)}$.\\

    We claim that, electrocution aside, case 2 is very unlikely. Fix any values for $w_{j^*}, \dots, w_{i^*- 1}$ and $\sigma_{j^*}, \dots, \sigma_{i^* - 1}$. By our dependent random choice, we have guaranteed that $|A_{i^*} \cap N(\pi_w^{(\sigma)}(v_{j^*}))| \geq n^{-1/11q} \cdot |A_{i^*}|$. So, in expectation over $\sigma_{i^*}$ there will be at least $100 n^{1/10}$ vertices of $N(\pi_w^{(\sigma)}(v_{j^*}))$ among the first $100 n^{1/10 + 1/11q}$ vertices of $A_{i^*}$. By a Chernoff bound, the probability of lying substantially below this expectation is extremely small. That is, the probability of having fewer than $n^{1/10}$ vertices of $N(\pi_w^{(\sigma)}(v_{j^*}))$ among the first $100n^{1/10 + 1/11q}$ vertices of $A_{i^*}$ is at most the probability of fewer than $n^{1/10}$ successes in $100n^{1/10 + 1/11q}$ flips of an $n^{-1/11q}$-biased coin. This occurs with probability at most $2^{-2(99n^{1/10})^2/(100n^{1/10+1/11q})} \leq o(2^{-n^{0.0001}})$ \cite{pm}. So, union bounding over all possible $i^*$, for $n$ sufficiently large the probability of lying in case 2 is at most $\frac{1}{200|V(H)|^{r+3}}$.\\

    Since cases 1 and 2 are exhaustive, we have upper bounded the probability of $\pi_{w}^{(\sigma)}(v_{j^*})$ electrocuting $\left(\pi_{w}^{(\sigma)}(v_{j_1}), \dots, \pi_{w}^{(\sigma)}(v_{j_r}) \right)$ by $\frac{1}{200|V(H)|^{r+3}} + \frac{1}{200|V(H)|^{r+3}} = \frac{1}{100|V(H)|^{r+3}}$.
\end{proof}

    Once again, we can now say that forbidden edges are unlikely unless the embedding is slippery.

    \begin{claim}
        Let $v_i, v_j \in V(H)$ be any pair of vertices such that $(v_i, v_j) \not\in E(H)$, and $| i - j | \leq q$. If $n$ is sufficiently large, $w$ is chosen uniformly from $[n^{1/10}]^{|V(H)|}$, and $\sigma$ is chosen uniformly from $\big( S_{U_1} \times S_{U_2}\big)^{|V(H)|}$, then $\Pr_{w,\sigma}[(\pi_w^{(\sigma)}(v_i), \pi_{w}^{(\sigma)}(v_j)) \in E(G)] \leq \frac{1}{50 |V(H)|^2}$.
    \end{claim}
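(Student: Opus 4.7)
The plan is to mirror exactly the structure of the analogous bound in the proof of \cref{thm:degeneracy-controls-ind}, with \cref{clm:no-local-slippery} replacing \cref{clm:not-slippery} as the tool for controlling bad events. First I would assume without loss of generality that $i < j$, and let $u_1, \dots, u_r$ be an $r$-tuple listing the earlier-in-degeneracy-order neighbors of $v_j$ in $H$, padding with repeats if $v_j$ has fewer than $r$ such neighbors. Conditioning on $w_1, \dots, w_{j-1}$ and $\sigma_1, \dots, \sigma_{j-1}$ then completely determines $\pi_w^{(\sigma)}(v_i)$ (since $i < j$) together with each $\pi_w^{(\sigma)}(u_\ell)$ (since every $u_\ell$ has position $< j$), and hence fixes the set $A_j := \bigcap_\ell N(\pi_w^{(\sigma)}(u_\ell)) \cap U_b$, where $b$ is the part of $H$ containing $v_j$.

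With this conditioning in place, the remaining randomness in $w_j$ (uniform on $[n^{1/10}]$) and $\sigma_j^{(b)}$ (uniform on $S_{U_b}$) makes $\pi_w^{(\sigma)}(v_j)$ a uniformly random element of $A_j$: restricting a uniform random permutation $\sigma_j^{(b)}$ of $U_b$ to $A_j$ produces a uniform permutation of $A_j$, so the rank-within-$A_j$ of any fixed vertex is uniform on $[|A_j|]$, and combined with uniform $w_j \in [n^{1/10}]$ this yields a uniform element of $A_j$. The conditional probability that $(\pi_w^{(\sigma)}(v_i), \pi_w^{(\sigma)}(v_j)) \in E(G)$ therefore equals $|A_j \cap N(\pi_w^{(\sigma)}(v_i))|/|A_j|$. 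By the definition of electrocution, this ratio is strictly less than $1/(100|V(H)|^2)$ whenever $\pi_w^{(\sigma)}(v_i)$ does not electrocute $(\pi_w^{(\sigma)}(u_1), \dots, \pi_w^{(\sigma)}(u_r))$, contributing at most $1/(100|V(H)|^2)$ to the overall probability.

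To handle the complementary case, I would observe that the electrocution event exhibits the image $\text{Im}_{\pi_w^{(\sigma)}}(V(H))$ as slippery. Taking $j^* = i$ and $(j_1, \dots, j_r) = (\text{pos}(u_1), \dots, \text{pos}(u_r))$, the two required conditions hold: $i$ is distinct from every $\text{pos}(u_\ell)$, because the hypothesis $(v_i, v_j) \notin E(H)$ guarantees $v_i$ is not among $v_j$'s neighbors in $H$; and $\max_\ell \text{pos}(u_\ell) - i \leq (j - 1) - i \leq q - 1 < q$ from $|i - j| \leq q$. Invoking \cref{clm:no-local-slippery}, the probability of electrocution is at most $1/(100|V(H)|^2)$, and combining this with the non-electrocution contribution via a union bound gives the desired $1/(50|V(H)|^2)$.

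The argument is essentially routine and mostly a direct translation of the non-electrocution/slipperiness dichotomy from the corresponding step of \cref{thm:degeneracy-controls-ind}'s proof; the two subtleties that warrant care are the verification that the two-layer randomness $(w_j, \sigma_j^{(b)})$ really does produce a uniform sample from $A_j$, and the verification of the index-gap condition in the slipperiness definition --- the latter being exactly the step where the hypothesis $|i - j| \leq q$ enters the argument.
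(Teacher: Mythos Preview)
Your proposal is correct and follows essentially the same approach as the paper's own proof: condition on $w_1,\dots,w_{j-1}$ and $\sigma_1,\dots,\sigma_{j-1}$, observe that the remaining randomness in $(w_j,\sigma_j)$ makes $\pi_w^{(\sigma)}(v_j)$ uniform on the candidate set, split according to whether $\pi_w^{(\sigma)}(v_i)$ electrocutes the earlier neighbors of $v_j$, and invoke \cref{clm:no-local-slippery} on the electrocution branch. Your explicit verification of the index-gap condition $\max_\ell \text{pos}(u_\ell) - i \leq (j-1)-i \leq q$ and of the distinctness $i \notin \{\text{pos}(u_\ell)\}$ is a welcome addition that the paper leaves implicit.
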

    \begin{proof}
        Assume without loss of generality that $i<j$. Fix random values for $w_1, \dots, w_{j-1}$ and $\sigma_1, \dots, \sigma_{j-1}$. Now, choosing uniform random values for $\sigma_j$ and $w_j$ will cause $\pi_w^{(\sigma)}(v_j)$ to be chosen as a uniform random vertex of $\bigcap_{\ell < j\colon (v_\ell, v_j) \in E(H)} N(v_\ell) \cap U_b$, where $b \in \{1,2\}$ is the index of the part of $H$ to which $v_j$ belongs. If the overall embedding is not slippery, then at most a $\frac{1}{100 |V(H)|^2}$ fraction of this set belongs to $N(\pi_w^{(\sigma)}(v_i))$. So, by \cref{clm:no-local-slippery} we have $\Pr_{w,\sigma}[(\pi_w^{(\sigma)}(v_i), \pi_{w}^{(\sigma)}(v_j)) \in E(G)] \leq \frac{1}{100 |V(H)|^2} + \frac{1}{100 |V(H)|^2} = \frac{1}{50 |V(H)|^2}$.
    \end{proof}

    Once again, the probability of $\pi_w^{(\sigma)}$ failing to be injective goes to $0$ in $n$, so union bounding this along with the probability of any forbidden edge existing gives overall probability strictly less than $1$ for large $n$. Thus, there exists some copy of $H$ in $G$ as a subgraph that avoids inducing any edge of $F$.
\end{proof}

\section{Possible connected counterexamples}\label{sec:real-counterexs}

In \cref{sec:degeneracy}, we have shown an upper bound on $\ex(n, \{K_{s,s}, H\ind\})$ in terms of $\ex(n, H)$, which it would be interesting to improve by strengthening the quantitative bounds of \cref{thm:degeneracy-controls-ind}. However, even the best possible control by degeneracy one could hope for would not suffice to demonstrate \cref{conj:connected}, because it is known the degeneracy does not completely determine standard extremal numbers. One is left with the question: is \cref{conj:connected} likely to be true? In this section, for the benefit of the unbelievers, we briefly discuss a potential source of counterexamples.\\

The simplest setting to consider would be where $s = 2$. Recall that $\ex(n, K_{2,2}) = \Theta(n^{3/2})$, where the lower bound is attained by the incidence graph of all points and lines over a finite projective plane $\PG(2,q)$~\cite{brown1966c4}. These projective plane incidence graphs are highly structured; in addition to avoiding $K_{2,2}$, they may avoid many other interesting structures. A possible approach to disproving \cref{conj:connected} would be to find some subgraph with extremal number $o(n^{3/2})$ which is nonetheless avoided in \emph{induced} form by some such family of incidence graphs. \\

Indeed, one can find examples of pattern graphs $H$ such that the point-line incidence graph of $\PG(2,q)$ must always contains many copies of $H$, but where none of the copies are induced. Perhaps the simplest example would be the the Heawood graph with one edge deleted. The \defn{Heawood graph}, which we will denote $\Hea$, is the incidence graph of the \defn{Fano plane} (i.e., all points and lines over $\PG(2,2)$) --- let $\Hea^- = \Hea \setminus e$ be the Heawood graph with a single edge deleted (note that $\Hea$ is edge-transitive, so we need not specify which edge is removed).\\

\begin{figure}[h]
    \centering
    \includegraphics[width=0.3\linewidth]{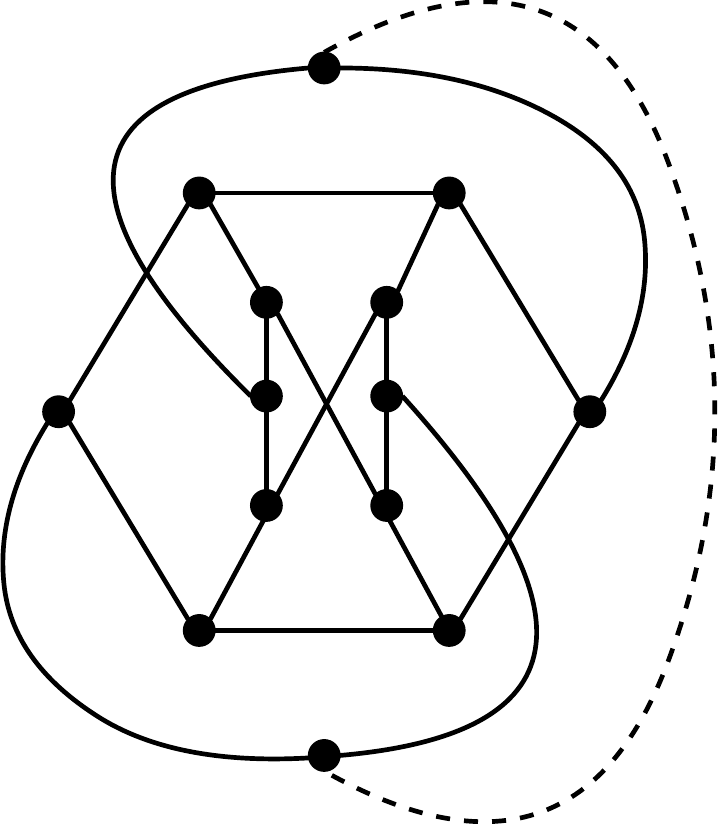}
    \caption{$\Hea^-$, the incidence graph of the Fano plane with a single edge deleted. Deleted edge shown dashed.}
    \label{fig:heawood-minus}
\end{figure}

\begin{proposition}
    $\ex(n, \{K_{2,2}, \Hea^-\ind\}) = \Theta(n^{3/2})$.
\end{proposition}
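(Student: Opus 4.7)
My plan is as follows. The upper bound $\ex(n, \{K_{2,2}, \Hea^-\ind\}) \leq O(n^{3/2})$ follows immediately from K\H{o}v\'ari--S\'os--Tur\'an applied just to the $K_{2,2}$-freeness. For the lower bound, I would use the classical construction: the point-line incidence graph $G_q$ of $\PG(2,q)$ for $q$ a power of $2$. This has $n = 2(q^2+q+1)$ vertices, $\Theta(n^{3/2})$ edges, and is $K_{2,2}$-free by the defining property of a projective plane, so the real task is to show that for $q$ even, $G_q$ contains no induced copy of $\Hea^-$.

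To prove the induced-freeness, I would argue by contradiction and set up convenient notation first. Using the edge- and vertex-transitivity of $\Hea$ together with the self-duality of the Fano plane (which yields an automorphism of $\Hea^-$ swapping its two bipartition classes), I would reduce to the case that the point-side of $\Hea^-$ maps to 7 points of $\PG(2,q)$, the line-side to 7 lines, and the removed edge corresponds to a specific Fano incidence $(p, \ell)$. Fixing the standard labeling with Fano points $\{1,\dots,7\}$ and lines $\{124, 235, 346, 457, 156, 267, 137\}$, and taking $p=1$, $\ell = 124$, the images $i^*$ and $L^*$ in $\PG(2,q)$ must realize all 20 remaining Fano incidences as edges and all Fano non-incidences as non-edges, while $1^* \notin 124^*$.

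The structural observation I would then exploit is that the four Fano points off of $\ell$, namely $\{3,5,6,7\}$, form a $4$-arc whose three diagonal points are exactly $\{1,2,4\}$, the three points of $\ell$. I would first verify that $3^*, 5^*, 6^*, 7^*$ are in general position in $\PG(2,q)$: for each pair $\{i,j\} \subset \{3,5,6,7\}$, the unique line of $\PG(2,q)$ through $i^*$ and $j^*$ is the embedded Fano line containing $i,j$ (e.g., the line through $3^*$ and $5^*$ is $235^*$), and any third point of $\{3,5,6,7\}$ is non-incident to that embedded line by preservation of Fano non-incidences. A parallel computation identifies the three diagonal points of this quadrangle in $\PG(2,q)$ as exactly $1^*, 2^*, 4^*$, since each pair of opposite embedded Fano lines (e.g., $235^*$ and $267^*$) shares exactly one of $\{1^*, 2^*, 4^*\}$ as a common incidence (here $2^*$), which must be their unique intersection in $\PG(2,q)$.

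To close the contradiction I would invoke the classical fact that $\PG(2,q)$ fails the Fano axiom precisely when the base field has characteristic $2$: so in our setting the three diagonal points $1^*, 2^*, 4^*$ must be collinear in $\PG(2,q)$. Since $124^*$ is the unique line of $\PG(2,q)$ through $2^*$ and $4^*$, collinearity forces $1^* \in 124^*$, contradicting the removed edge. The main obstacle I expect is the ``general position'' verification --- it must be carried out inside $\PG(2,q)$ rather than only in the Fano plane, and crucially uses that the induced embedding preserves Fano \emph{non-incidences}. Once this is established, the contradiction follows immediately from characteristic-$2$ projective geometry.
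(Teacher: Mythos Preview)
Your proposal is correct and follows essentially the same approach as the paper: the lower bound comes from the point--line incidence graph of $\PG(2,2^a)$, and the absence of an induced $\Hea^-$ is deduced from the classical fact that in characteristic~$2$ the diagonal triangle of a complete quadrangle is degenerate. You are simply more explicit than the paper about several points it leaves implicit --- using edge-transitivity of $\Hea$ to normalize which edge is deleted, invoking the part-swapping automorphism of $\Hea^-$ (equivalently, self-duality of $\PG(2,q)$) to fix the orientation of the embedding, and verifying inside $\PG(2,q)$ that the four images $3^*,5^*,6^*,7^*$ are genuinely in general position using the preserved non-incidences --- all of which are needed for a complete argument.
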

\begin{proof}
    A \defn{complete quadrangle} is a set of four points, no three of which are colinear, and the $6$ lines between each pair --- the \defn{diagonals} of a complete quadrangle are the three additional intersection points of those lines. For any finite field $\F_{q}$, we know the diagonals of a complete quadrangle in $\PG(2,q)$ will be colinear if and only if $q$ is a power of $2$ ~\cite{projectivebook}.\\ 
    
\begin{figure}[h]
    \centering
    \includegraphics[width=0.18\linewidth]{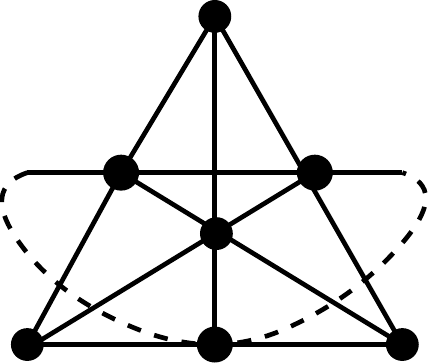}
    \caption{A complete quadrangle, with a line between two of the three diagonal points. The dashed extension of that line indicates that it passes through the third diagonal if and only if the underlying field has characteristic $2$ (in which case the configuration is isomorphic to the Fano plane).}
    \label{fig:fano}
\end{figure}

    In particular, this means that $\Hea^-$ cannot appear as an induced subgraph of the point-line incidence graph of $\PG(2,2^a)$ for any $a\in \N$: given $7$ points and $6$ lines corresponding to a complete quadrangle, a line incident to two of the diagonals must also be incident to the third, thus inducing $\Hea$. So incidence graphs of $\PG(2, 2^a)$ give a family of $n$-vertex graphs with $\Theta(n^{3/2})$ edges avoiding both $K_{2,2}$ as a subgraph and $\Hea^-$ as an induced subgraph. 
\end{proof}

If we knew that $\ex(n, \Hea^-) < o(n^{3/2})$, then this would therefore be a counterexample to \cref{conj:connected}. It seems perhaps plausible that $\Hea^-$ could have a small extremal number: it is a 2-degenerate graph of girth 6, and appears as a subgraph (although not necessarily induced) of every complete point-line incidence graph over a finite projective plane. The strongest lower bound we know on its extremal number is $\ex(n, \Hea^-) \geq \Omega(n^{7/5})$, obtained by considering a random host graph. It would be quite interesting to determine tight bounds on the extremal number of $\Hea^-$, and thus determine whether or not it represents a counterexample to \cref{conj:connected}. However this may be a difficult task: there are remarkably few graphs for which the true Tur\'an exponent is known, with the techniques involved typically quite specialized to the particular graph in question~\cite{furedi-history}.\\

\begin{figure}[h]
    \centering
    \includegraphics[width=0.7\linewidth]{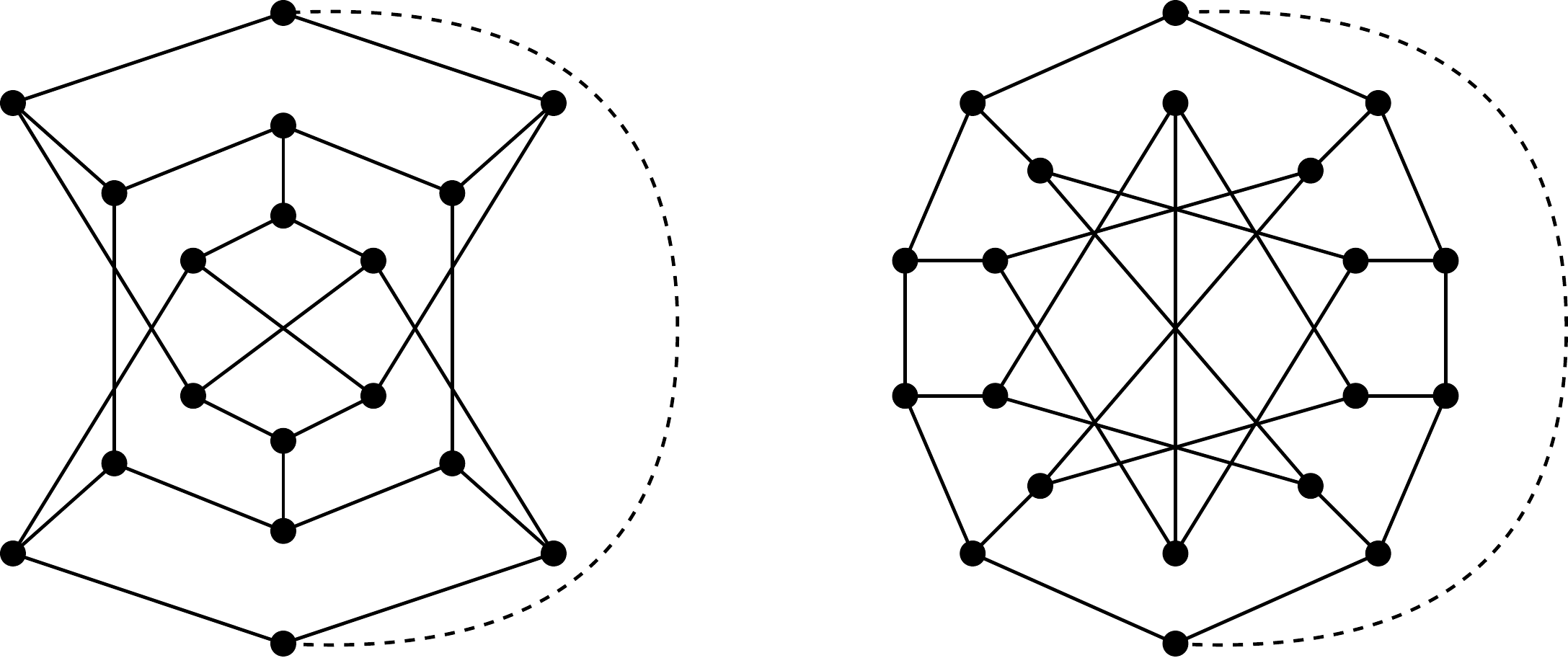}
    \caption{Two graphs $H$ where the bound $\ex(n, \{K_{2,2}, H\ind\}) = \Theta(n^{3/2})$ can be obtained from incidence geometry theorems: the Pappus graph (left), and the Desargues graph (right), each with a single deleted edge (denoted by a dashed line). Pappus's theorem and Desargues's theorem, respectively, ensure that neither can appear as an induced subgraph of $\PG(2, q)$ for any $q$, since the dashed edge will always be present.}
    \label{fig:pappus-and-desargues}
\end{figure}

The Heawood graph is, of course, far from the only case of such a structure: there are many more complicated theorems demonstrating that, in some particular incidence configuration, three points must be colinear \cite{richter1995incidences, fomin2023incidences}. Such theorems will allow us to find subgraphs which appear in all projective plane incidence graphs, but can be avoided in induced form (see \cref{fig:pappus-and-desargues} for two additional examples). \cref{conj:connected} would hold that all such graphs have extremal number $\Omega(n^{3/2})$; evidence for or against that prediction could give intuition as to whether \cref{conj:connected} is likely to be true. As a point against the prediction, we note that Conlon has conjectured that every $2$-degenerate $C_4$-free bipartite graph has extremal number $O(n^{3/2 - \delta})$ for some $\delta$ (this conjecture is cited as personal communication in \cite{shapira2025new}) --- as all three graphs mentioned in this section fit those criteria, at least one of Conlon's conjecture and \cref{conj:connected} must be false.

\section{Acknowledgements}

The results in this paper were produced as a part of the Duluth REU program. We thank the organizers, advisors, visitors and participants of the program for offering mathematical advice and providing a stimulating research environment --- we are especially grateful to Maya Sankar and Carl Schildkraut for a number of helpful discussions. We also thank Joe Gallian and Colin Defant for running the program, and Jane Street Capital, Ray Sidney, Eric Wepsic, and NSF grant no. DMS-2409861 for providing funding. We thank Noah Kravitz, Carl Schildkraut, and Aleksa Milojevi\'c for giving feedback on earlier versions of this manuscript.

\printbibliography
\end{document}